\newtheorem{hypo1}{Hypothesis}[section]
\newtheorem{coro1}[hypo1]{Corollary}
\newcommand{\Z}{\mathbb{Z}}
\newtheorem{hypo}{Hypothesis}
\newtheorem{thm}[hypo]{Theorem}
\newtheorem{lem}[hypo]{Lemma}
\def\PP{\mathbb{P}}
\def\RR{\mathbb{R}}
\def\ZZ{\mathbb{Z}}
\def\EE{\mathbb{E}}
\def\NN{\mathbb{N}}
\let\BFseries\bfseries\def\bfseries{\BFseries\mathversion{bold}} 
\def\dd{\mbox{d}}
\begin{document}

\title{Random walks and branching processes in correlated Gaussian environment}

 \author{ F. Aurzada}
\address{AG Stochastik, Fachbereich Mathematik, Technische Universit\"at Darmstadt, Schlossgartenstr.\ 7, 64289 Darmstadt, Germany.}
\email{aurzada@mathematik.tu-darmstadt.de}

\author{A. Devulder}
\address{
Laboratoire de Math\'ematiques de Versailles, UVSQ, CNRS, Universit\'e Paris-Saclay, 78035 Versailles, France
}
\email{devulder@math.uvsq.fr }

\author{N. Guillotin-Plantard}
\address{Institut Camille Jordan, CNRS UMR 5208, Universit\'e de Lyon, Universit\'e Lyon 1, 43, Boulevard du 11 novembre 1918, 69622 Villeurbanne, France.}
\email{nadine.guillotin@univ-lyon1.fr}

\author{F. P\`ene}
\address{Universit\'e de Brest and IUF,
LMBA, UMR CNRS 6205, 29238 Brest cedex, France}
\email{francoise.pene@univ-brest.fr}

\subjclass[2010]{60G50, 60G22, 60G10, 60G15, 60F10, 60J80, 60K37, 62M10}
\keywords{First passage time, Fractional Gaussian noise, Long-range dependence, Persistence, Random walk, Random environment, Branching process.}

\date{\today}

\begin{abstract}
We study persistence probabilities for random walks in correlated Gaussian random environment first studied by Oshanin, Rosso and Schehr  \cite{oshaninetal}. From the persistence results, we can deduce properties of critical branching processes with offspring sizes geometrically distributed with correlated random parameters. More precisely,
we obtain estimates on the tail distribution of its total population size, of its maximum population, and of its extinction time.
\end{abstract}

\maketitle

\section{Introduction and main results}
\subsection{Random walks in correlated random environment}
{\it Random walks in random environment}
(RWRE, for short) model the displacement of a particle in an inhomogeneous medium.
We consider a nearest-neighbor random walk $S=(S_n)_{n\ge 0}$, in $\mathbb Z$, in a random environment.
Let $\omega:=(\omega_i)_{i\in\mathbb Z}$ be a stationary sequence of random variables with values in $(0,1)$ defined on the probability space
$(\Omega,\mathcal{F}, P)$.
A realization of $\omega$ is called an {\it environment}.
The RWRE $S$ is then defined as follows.
Given $\omega$, under the {\it quenched law} $P_\omega^x$ for $x\in\ZZ$,
$S:=(S_n)_{n\geq 0}$ is a Markov chain satisfying $P_\omega^x[S_0=x]=1$ and
for every $n\in\mathbb N$, $k\in\Z$ and $i\in\Z$,
\begin{equation}\label{eqDefRWFRE}
    P_{\omega}^x[S_{n+1}=k|S_n=i]=\left\{
\begin{array}{ll}
\omega_i & \text{ if } k=i+1,\\
1-\omega_i & \text{ if } k=i-1,\\
0      & \text{ otherwise}.
\end{array}
\right.
\end{equation}
We simply write $P_\omega$ for $P_\omega^0$.
We also define  the {\it annealed law} by
$$
    \PP[\cdot]
:=
    \int P_{\omega} [\cdot]\,  {\rm d} P(\omega).
$$
The expectations with respect to $\PP$, $P_{\omega}$, and $P$ will be denoted by $\EE$, $E_{\omega}$, and $E$, respectively.
This model has many applications in physics and in biology, see e.g. Hughes \cite{Hug} and Oshanin, Rosso and Schehr  \cite{oshaninetal}.

The case when $(\omega_i)_i$ is a sequence of independent identically distributed
random variables
has been widely studied since the seminal works by Solomon \cite{S75}, who proved a recurrence and transience criterium and a law of large numbers,
and by Sinai \cite{S82}, who proved a localization result in the recurrent case with some additional assumptions. We refer e.g.\ to R{\'e}v{\'e}sz \cite{Revesz}, Zeitouni \cite{Zeitouni}, and Shi \cite{Shi} and to the references therein for more properties of RWRE in such environments.

In the present paper  we consider a correlated context that has been recently introduced in statistical physics (see Oshanin, Rosso and Schehr  \cite{oshaninetal}). Before defining our setup more precisely, we introduce some more notation.
In the study of RWRE, the {\it potential} $V=(V(k))_{k\in\mathbb Z}$ plays a major role
(see for example formulae \eqref{eqProbaAtteinte} and \eqref{InegEsperanceZeitouni} below).
It is defined as follows:
$$
    X_i:=\log(1-\omega_i)/\omega_i,
\qquad
    V(0):=0,
\qquad
    V(k+1):=V(k)+X_{k+1}
$$
for every $i\in\mathbb Z$ and $k\in\mathbb Z$.
To ensure that no side (left/right) is privileged, one has to assume that $E[X_i]=0 $.
We reinforce this by assuming that the $X_i$'s are standard Gaussian random variables.
We say that $S$ is a {\it random walk in a correlated Gaussian environment} (RWCGE).
It is worth noting that $(S_n)_{n\in\NN}$ is not Markovian under $\PP$.
We set $r(j):=E[X_0X_j]= E[X_kX_{k+j}]$, $j\in\Z$, $k\in\Z$. Note that for $n\in\mathbb N$, the variance
$\sigma_n^2$ of $V(n)$ is given by $\sigma^2_n=\sum_{i,j=1}^nr(i-j)$.

Our setup is the following. Let $H\in[\frac 12,1)$.
We assume that $(r(n))_{n\in\NN}$ is non-negative and $(2H-2)$-regularly varying
(i.e. $(n^{2-2H}r(n))_{n\in\NN}$ is slowly varying).
This ensures that
\begin{equation}\label{formulaVarVn}
    \sigma^2_n
:=
    \text{Var}(V(n))=n^{2H}\ell(n),
\qquad
    n\in\NN,
\end{equation}
for some function $\ell:[0,+\infty[\rightarrow[0,+\infty[$, slowly varying at infinity
(see for example Bingham, Goldie, Teugels \cite[Prop 1.5.8, Prop 1.5.9a]{BGT} or Taqqu \cite[Lemma 3.1]{taqqu}).
A sequence satisfying \eqref{formulaVarVn}
is said to have {\it long range dependence} if $H>1/2$, see \cite{samorodnitsky} for a recent overview.
Due to (Taqqu \cite{taqqu}, Lemma 5.1), the process
$((V(\lfloor nt\rfloor)/\sigma_n)_{t\in\RR})_n$ converges in distribution as $n\to+\infty$
to a two-sided fractional Brownian motion $B_H:=(B_{H}(t))_{t\in \mathbb{R}}$ with Hurst parameter $H$.
Recall that $B_H$ is a centered real Gaussian process such that $B_{H}(0)\equiv 0$ with covariance function given by
$$
    E\big[ B_{H}(t) B_{H}(s)\big]
=
    \frac{1}{2} \left(|t|^{2H} + |s|^{2H} - |t-s|^{2H}\right)
\qquad
    s\in {\mathbb R},t\in\mathbb{R}.
$$
This process $(B_H(t),\ t\in\RR)$ has stationary increments and is self-similar of index $H$, that is,
$(B_H(c t) )_{t\in\RR}$   and $\big(c^H B_H(t)\big)_{t\in\RR}$ have the same law for every $c>0$.
\\
Very few results are known in our context (see \cite{oshaninetal} and \cite{AurzadaBaumgartenjpa}).
In the special case when the potential is itself a fractional Brownian motion of
Hurst exponent $H\in(1/2,1)$,
Kawazu, Tamura, Tanaka \cite{KTT} (see their Theorem 5) and Schumacher \cite{Schu} proved the weak convergence of $\left({S_n}/{(\log n)^{1/H}}\right)_{n\geq 2}$
to a non-degenerate law.
\\
We define the first hitting time $\tau(k)$ of $k\in \ZZ$ by the random walk $S$, that is,
$$
    \tau(k)
:=
    \inf\{n\geq 1,\ S_n=k\},
\qquad
    k\in\mathbb{Z}.
$$
In this paper we are concerned with the  persistence probability of $S$, i.e. the annealed probability that the RWCGE $S$ does not visit the site $-1$ before time $N$.
We refer to Aurzada and Simon \cite{AS} for a recent survey about persistence from a mathematical point of view. We will use the recent results of \cite{AGPP} and
the new approach used therein.

Persistence has also received a considerable attention in statistical physics, see e.g.\  Bray, Majumdar and Schehr \cite{BMS13} and Majumdar \cite{Maj1}. Persistence is perceived as a measure of how quickly a physical system started in a disordered state returns to the equilibrium.

Our first main result is the following.

\medskip

\begin{thm}\label{persist}
Let $H\in[\frac 12,1)$.
Assume that $(X_i)_{i\in\mathbb Z}$ is a stationary sequence of standard Gaussian random variables.
Assume that $(r(n))_n$ is non-negative and $(2H-2)$-regularly varying.
Then there exist $N_0\in\mathbb N$ and a slowly varying function at infinity $L_0$  such that, for every $N\ge N_0$,
$$
    \frac{(\log N)^{-\left(\frac{1-H}{H}\right)}}{L_0(\log N)}
\leq
    \PP\Big[ \min_{k=1,\ldots, N} S_k > -1 \Big] =\PP[\tau(-1)>N]\leq  (\log N)^{-\left(\frac{1-H}{H}\right)} L_0(\log N).
$$
Moreover if $V=B_H$, then there exist $c>0$ and $N_0\in\mathbb N$ such that, for every $N\ge N_0$,
$$
    (\log N)^{-\left(\frac{1-H}{H}\right)} e^{-c\sqrt{\log \log N}} \leq
    \PP\Big[ \min_{k=1,\ldots, N} S_k > -1 \Big] \leq  (\log N)^{-\left(\frac{1-H}{H}\right)} (\log \log N)^{c}.
$$
\end{thm}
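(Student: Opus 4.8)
The plan is to reduce the persistence probability of the random walk to a persistence/first-passage event for the potential $V$, and then to invoke known fractional-Brownian-motion persistence asymptotics together with the functional convergence $V(\lfloor n\cdot\rfloor)/\sigma_n \Rightarrow B_H$.

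First I would recall the classical connection between RWRE and its potential. Conditionally on the environment, the hitting probabilities of the birth–death chain $S$ are explicit: using the standard formula (this is \eqref{eqProbaAtteinte} referred to in the text), for $a<0<b$,
\[
P_\omega\!\big[\tau(b)<\tau(a)\big]
=
\frac{\sum_{j=a}^{-1} e^{V(j)}}{\sum_{j=a}^{b-1} e^{V(j)}},
\]
and more usefully the quenched mean hitting time of $-1$ is controlled from above and below (this is \eqref{InegEsperanceZeitouni}) by sums of the form $\sum e^{V(i)-V(j)}$ over $0\le i,j$. The event $\{\tau(-1)>N\}$ therefore happens, up to polynomial-in-$N$ factors absorbed into the slowly varying function, essentially when the potential $V$ stays below roughly $\log N$ on $\{0,1,\dots\}$ up to the relevant spatial scale; conversely, if $V$ rises above $\log N$ early then $\tau(-1)\le N$ with high quenched probability. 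Concretely I would show
\[
\PP[\tau(-1)>N]
\asymp
P\Big[\max_{0\le k\le m(N)} V(k) \le \log N \ \text{(roughly)}\Big]
\]
for an appropriate spatial horizon $m(N)$, with the error only affecting the slowly varying prefactor; this is the step where I would lean heavily on the approach of \cite{AGPP} and on the quenched second-moment / Zeitouni-type estimates \eqref{eqProbaAtteinte}–\eqref{InegEsperanceZeitouni}.

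Next, by self-similarity heuristics (made rigorous via \eqref{formulaVarVn}), the event that the Gaussian process $V$ stays below level $\log N$ over a spatial window of size $m$ is, at the fBm scaling, the event that $B_H$ stays below $1$ over a window of size $\sim m/(\log N)^{1/H}$, because $\sigma_m = m^H\ell(m)^{1/2}$ matches $\log N$ precisely when $m \asymp (\log N)^{1/H}$ up to slowly varying corrections. The persistence exponent of fractional Brownian motion over $[0,T]$ is known to be $T^{-(1-H)}$ up to constants (this is the classical one-sided exit result for fBm, which I would cite), hence one obtains the leading term $(\log N)^{-(1-H)/H}$. Turning this heuristic into the stated two-sided bound requires: (i) transferring the persistence estimate from the continuous fBm back to the discrete Gaussian sequence $V$ — here I would use the convergence $V(\lfloor n\cdot\rfloor)/\sigma_n\Rightarrow B_H$ from Taqqu's lemma, but since that is only a distributional convergence I would instead use comparison inequalities for Gaussian processes (Slepian / Li–Shao type lemmas) to get genuine upper and lower bounds with controlled slowly varying errors; (ii) checking that the truncation to a finite spatial horizon $m(N)$ and the passage between "$\tau(-1)>N$" and "$V$ stays low" only costs slowly varying factors, which follows from the Gaussian tail of $V$ and a union bound over the at most polynomially many relevant sites. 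Packaging all the slowly varying and error terms into a single slowly varying function $L_0$ gives the first display.

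For the sharper statement when $V=B_H$ exactly, no functional approximation is needed: $V$ is literally a fractional Brownian motion, so I would directly apply the best available quantitative one-sided exit estimates for fBm, which give the persistence probability over $[0,T]$ as $T^{-(1-H)}$ up to factors of the form $e^{\pm c\sqrt{\log T}}$ on the lower side and $(\log T)^{c}$ on the upper side — these are exactly the known polylogarithmic/subpolynomial corrections in the fBm persistence literature (e.g. Aurzada and collaborators). Substituting $T\asymp (\log N)^{1/H}$ and carrying the correction terms through (noting $\sqrt{\log T} \asymp \sqrt{\log\log N}$ and $\log T\asymp \log\log N$) yields the second display. The main obstacle I anticipate is step (i)–(ii) of the previous paragraph: rigorously controlling, in the correlated environment, the two sided reduction from the walk's persistence to a clean Gaussian exit event while keeping all losses inside a slowly varying function — in particular getting the \emph{lower} bound, which needs a quantitative lower estimate on the probability that a long-range-dependent Gaussian sequence stays below a slowly growing barrier, uniformly enough to survive the change of spatial scale. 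I expect this is where the technique of \cite{AGPP}, combined with Gaussian correlation/comparison inequalities, does the essential work.
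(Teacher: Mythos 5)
Your overall architecture matches the paper's: reduce the annealed persistence of the walk to a one-sided exit event for the Gaussian potential $V$, compute that exit probability at the spatial scale $(\log N)^{1/H}$ via the stationary-increments persistence bounds of \cite{AGPP}, and control the transfer using positive association (Slepian / Khoshnevisan--Lewis) and the hitting-time formula \eqref{eqProbaAtteinte}. You also correctly locate the main difficulty in making the two-sided reduction quantitative with only slowly varying losses.

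However, the reduction as you state it is quantitatively off, and your own heuristic does not produce the claimed exponent. You propose $\PP[\tau(-1)>N]\asymp P[\max_{k\le m(N)} V(k)\le \log N]$ with $m(N)$ chosen so that $\sigma_{m(N)}\asymp \log N$, i.e.\ $m(N)\asymp(\log N)^{1/H}$. But with this choice the barrier $\log N$ is of the same order as $\sigma_{m(N)}$, so the rescaled horizon $T=m/(\log N)^{1/H}\asymp 1$ and $T^{-(1-H)}\asymp 1$: the probability is of order one, not $(\log N)^{-(1-H)/H}$. What actually makes the exit probability small in the paper is a barrier of much lower height than $\sigma_{m(N)}$, namely $a\log\log N$ in the upper bound (via the bad-environment set $\mathcal{B}_N^{(1)}=\{T(a\log\log N)\le b_N\le T(-\tfrac{1-\varepsilon}{2}\log N)\}$ and Lemma~\ref{PERSIbis}) and level $1$ in the lower bound (via $\mathcal{G}_N^{(1)}=\{T(-2\log N)<T(1)\}$ and Lemma~\ref{lem:lemma5}); with such a barrier the relevant event is essentially $\{\max_{k\le (\log N)^{1/H}}V(k)\le 0\}$, whose probability is $\asymp \sigma_{b_N}/b_N\asymp(\log N)^{-(1-H)/H}$ up to slowly varying factors. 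The choice of a barrier $\asymp \log\log N$ rather than $\log N$ is not cosmetic: it is exactly what lets one show, in Lemma~\ref{uniform}, that the quenched escape time from the valley is $\lesssim N^{1-\varepsilon}(\log N)^{O(1)}\ll N$ (so the walk does hit $-1$), which would fail for a barrier of height $\log N$. A related misattribution: the $e^{-c\sqrt{\log\log N}}$ correction in the lower bound when $V=B_H$ does not come from the known fBm one-sided exit estimates (which lose only $(\log T)^{-c}$ factors); it is paid in the transfer step, specifically from the bound $\sup_{|k|\le(\log N)^{(1+\varepsilon)/H}}|X_k|\le c\sqrt{\log\log N}$ in $\mathcal{G}_N^{(3)}$ and the resulting factor in Lemma~\ref{lem:ppviagoodenv}. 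So while the plan is pointed in the right direction, the actual reduction---the bad/good environment decomposition together with the excursion-counting argument behind \eqref{EEEEEE1}--\eqref{EEEEEE2}---is missing, and the version you wrote down would not give a decaying bound.
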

\medskip


\subsection{Branching Processes in Random Environment}
The second object of study in this paper is Branching Processes in Random Environment.
They are an important generalization of the Galton Watson process, where the reproduction law
depends on a random environment indexed by time.
This model was first introduced by Smith and Wilkinson \cite{SmithWilkinson}.
In a few papers, the reproduction laws are assumed to be stationary and ergodic,
we refer to Athreya and  Karlin (\cite{AthreyaKarlin1} and \cite{AthreyaKarlin2})
for basic results in this general case.
However in most studies, the reproduction laws are supposed to be independent and identically distributed,
and they are often assumed to be geometrical laws.
See e.g. Grama, Liu and Miqueu \cite{GramaLiuMiqueu} for a recent overview and bibliography on the subject.

It is natural to consider cases for which the reproduction laws of
the different generations are correlated.
To this aim, we use the well known correspondence between recurrent random walks in random environment and critical branching processes in random environment with geometric distribution of offspring sizes (see e.g.\ Afanasyev \cite{Afa}).
We consider the process $(Z_n)_{n\in\NN}$ defined by
\begin{equation}\label{eqDefBPRE}
    Z_0
:=1
\qquad \mbox{and}\qquad
    Z_n
:=
    \sum_{k=1}^{\tau(-1)}{\bf 1}_{\{S_{k-1}=n-1, S_k=n\}},
\qquad
    n\geq 1.
\end{equation}
In other words, $Z_n$ is, for $n\geq 1$, the number of steps from $n-1$ to $n$  made by the RWCGE $S$ before reaching negative values.
This process $(Z_n)_{n\in\mathbb{N}}$ is a {\it Branching Process in a Correlated Gaussian Environment (BPCGE)}.

More precisely, let $O_{n,k}$ be the number of steps $(n\rightarrow n+1)$
between the $k$-th and the $(k+1)$-th step $(n-1\rightarrow n)$ for $(n,k)\in \NN\times \NN^*\setminus\{(0,1)\}$,
and between $0$ and $\tau(-1)$ for $n=0$ and $k=1$, where $\NN^*:=\NN\setminus\{0\}$.
Observe that, given $\omega$, $(O_{n,k})_{n\geq 0, k\geq 1}$
is a double sequence of independent random variables and that
\begin{equation}\label{eqDefRWFREBis}
    Z_0=1,
\quad
    Z_{n+1}
:=
    \sum_{k=1}^{Z_n}O_{n,k},
\quad
    n\geq 0,
\qquad
    P_\omega(O_{n,k}=N)=(1-\omega_n)\omega_n^N,
\quad   (k,n,N)\in\NN^*\times \NN^2.
\end{equation}
Hence, the number of offsprings $O_{n,k}$ of the $k$-th particle of generation $n$ (of the BPCGE $Z$)
is, under $P_\omega$, a geometric random variable on $\mathbb{N}$ with mean $e^{-X_n}$.
So the BPCGE is critical, and in particular there is almost surely extinction of this BPCGE
(see e.g. Tanny \cite{Tanny}, eq. (2) and the terminology before, coming from Tanny \cite{Tanny2}, Thm 5.5).
Note that $\tau(-1)=2\sum_{j=0}^\infty Z_j-1$.
Thus, the total population size $\sum_{j=0}^\infty Z_j$ of the BPCGE $(Z_n)_n$ satisfies
$
    \PP\big(\sum_{j=0}^\infty Z_j>N\big)
=
    \PP[\tau(-1)>2N-1]
=
    \PP(\min_{k=1,\dots, 2N-1}S_k>-1)
$,
$N\in\NN^*$. Consequently, Theorem \ref{persist} leads to the following result.

\medskip

\begin{coro1}[{\bf Total population size of BPCGE}] \label{CorolBPRETotalSize}
Under assumptions of Theorem \ref{persist},
there exist $N_0\in\mathbb N$ and a slowly varying function at infinity  $L_0$ such that  the total population size of the BPCGE $Z$ before its extinction satisfies, for every $N\ge N_0$,
$$
    \frac{(\log N)^{-\left(\frac{1-H}{H}\right)}}{L_0(\log N)}
\leq
       \PP\bigg[ \sum_{j=0}^\infty Z_j>N \bigg]
 \leq  (\log N)^{-\left(\frac{1-H}{H}\right)} L_0(\log N).
$$
Moreover if $V=B_H$, then there exist $c>0$ and $N_0\in\mathbb N$ such that, for every $N\ge N_0$,
$$
  (\log N)^{-\left(\frac{1-H}{H}\right)} e^{-c\sqrt{\log \log N}}
\leq
       \PP\bigg[ \sum_{j=0}^\infty Z_j>N \bigg]
\leq
    (\log N)^{-\left(\frac{1-H}{H}\right)} (\log \log N)^{c}.$$
\end{coro1}

Let $\mathcal T:=\inf\{n\geq 1; Z_n=0\}$ be the extinction time of the BPCGE $Z$.

Our second main result deals with the survival probability $\PP[ \mathcal T > N ]$ of BPCGE.

\medskip

\begin{thm}[{\bf Extinction time of BPCGE}] \label{main1}
Under assumptions of Theorem \ref{persist},
there exist $c>0$, $C>0$ and $N_0\in \mathbb N$ such that,
for every $N\ge N_0$,
\begin{equation}\label{cinq}
    N^{-(1-H)}\sqrt{\ell(N)} (\log N)^{-c}
\leq
    \PP[ \mathcal T > N ]
\leq
    C N^{-(1-H)}\sqrt{\ell(N)}.
\end{equation}
\end{thm}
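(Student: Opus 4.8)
My plan is to reduce $\PP[\mathcal T>N]$ to an annealed hitting probability and then apply the persistence estimate for the Gaussian potential that also underlies Theorem~\ref{persist}. First, by \eqref{eqDefRWFREBis}, $Z_n\ge1$ holds iff $S$ performs the step $n-1\to n$ at least once before $\tau(-1)$; and since reaching a level $n\ge1$ forces $S$ to traverse every edge $(m-1,m)$ with $m\le n$, we get $\{Z_n\ge1\ \text{for all }n\le N\}=\{\tau(N)<\tau(-1)\}$, whence $\PP[\mathcal T>N]=\PP[\tau(N)<\tau(-1)]$. The classical hitting-probability formula for nearest-neighbour RWRE with transition probabilities \eqref{eqDefRWFRE} (compare \eqref{eqProbaAtteinte}; see \cite{Zeitouni}), together with $V(-1)=-X_0$, gives
\[
 P_\omega[\tau(N)<\tau(-1)]=\Bigl(\sum_{j=-1}^{N-1}e^{V(j)-V(-1)}\Bigr)^{-1}=\Bigl(1+e^{X_0}\sum_{j=0}^{N-1}e^{V(j)}\Bigr)^{-1},
\]
so, with $M_N:=\max_{0\le j\le N-1}V(j)\ge V(0)=0$ (hence $e^{M_N}\le\sum_{j=0}^{N-1}e^{V(j)}\le Ne^{M_N}$),
\[
 \PP[\mathcal T>N]=\EE\Bigl[\Bigl(1+e^{X_0}\sum_{j=0}^{N-1}e^{V(j)}\Bigr)^{-1}\Bigr].
\]
As \eqref{formulaVarVn} gives $\sigma_N/N=N^{-(1-H)}\sqrt{\ell(N)}$, the estimate \eqref{cinq} is exactly $\PP[\tau(N)<\tau(-1)]\asymp\sigma_N/N$ up to a factor $(\log N)^{\pm c}$ — the $n=N$ instance of the estimate $\PP[\tau(n)<\tau(-1)]\asymp\sigma_n/n$ that the approach of \cite{AGPP} supplies and that, taken at the Sinai scale $n\asymp(\log N)^{1/H}$, yields Theorem~\ref{persist}. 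The task is thus to run that estimate at the polynomial scale $n=N$.

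For the upper bound I would use $(1+x)^{-1}\le\min(1,1/x)$ to write $P_\omega[\tau(N)<\tau(-1)]\le\min(1,e^{-X_0-M_N})\le e^{|X_0|}\min(1,e^{-M_N})$, and then observe that the factor $e^{|X_0|}$ only costs a bounded multiplicative constant: $X_0$ is Gaussian with all exponential moments, and conditioning on its value shifts the mean of $V(j)$ by $x\sum_{k=1}^{j}r(k)$, which is negligible against $\sigma_j$ on the range of $x$ that contributes. Since $M_N\ge0$, this reduces \eqref{cinq} (upper part) to $\EE[e^{-M_N}]=\int_0^\infty P(\max_{0\le j\le N-1}V(j)\le h)\,e^{-h}\,\dd h\le C\,\sigma_N/N$, which follows, after integrating out the weight $e^{-h}$, from the poly-log-free one-sided small-deviation bound $P(\max_{0\le j\le N-1}V(j)\le h)\le C(h+1)^{\gamma}\,\sigma_N/N$. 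This bound is the key input: I would derive it from the invariance principle of Taqqu \cite{taqqu}, the persistence asymptotics of fractional Brownian motion (one-sided exit exponent $1-H$; see \cite{AS} and the references therein), and the variance asymptotics \eqref{formulaVarVn}.

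For the lower bound I would restrict the expectation to the event $\mathcal G_N:=\{X_0\le-1\}\cap\{V(j)\le-2\log(j+1)\ \text{for }1\le j\le N-1\}$. On $\mathcal G_N$ one has $e^{X_0}\sum_{j=0}^{N-1}e^{V(j)}\le e^{-1}\bigl(1+\sum_{j\ge1}(j+1)^{-2}\bigr)\le1$, so $P_\omega[\tau(N)<\tau(-1)]\ge\tfrac12$ and $\PP[\mathcal T>N]\ge\tfrac12\,P(\mathcal G_N)$. As $(X_i)_i$ is Gaussian with $r\ge0$, the two decreasing events defining $\mathcal G_N$ are positively associated (Pitt's inequality), so $P(\mathcal G_N)\ge\Phi(-1)\,P(V(j)\le-2\log(j+1)\ \text{for }1\le j\le N-1)$; lowering the boundary by the $o(\sigma_j)$-height term $2\log(j+1)$ costs at most a poly-log factor, so this is $\ge(\log N)^{-c}\,P(\max_{1\le j\le N-1}V(j)\le0)\ge(\log N)^{-c}\,\sigma_N/N$ by the persistence estimate. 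Altogether $\PP[\mathcal T>N]\ge N^{-(1-H)}\sqrt{\ell(N)}(\log N)^{-c'}$.

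The hard part will be the persistence estimate $P(\max_{0\le j\le n}V(j)\le h)\asymp\sigma_n/n$ for the correlated Gaussian potential (with the matching lower bound carrying the poly-log loss that produces the $(\log N)^{-c}$ in \eqref{cinq}). The exponent $1-H$ is classical, but pinning down the slowly varying factor $\sqrt{\ell(n)}=\sigma_n/n$ is subtle: a plain rescaling of Molchan's continuous bound would only give $(h/\sigma_n)^{(1-H)/H}$, which has the wrong slowly varying correction when $H>1/2$, so one must exploit the discrete structure of the walk — through a quantitative coupling/invariance-principle argument — and, for the lower bound, concatenate a fast-descent phase with a stay-low phase for a process that is \emph{not} Markov, using Gaussian correlation inequalities (Slepian, FKG/Pitt) in place of the strong Markov property. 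This is precisely the new approach of \cite{AGPP}; granting it, Theorem~\ref{main1} is just its $n=N$ specialization, as Theorem~\ref{persist} is its $n\asymp(\log N)^{1/H}$ specialization.
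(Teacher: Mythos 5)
Your reduction to $\PP[\mathcal T>N]=\PP[\tau(N)<\tau(-1)]=\EE\big[(\text{partition-function sum})^{-1}\big]$ is correct and matches the paper, and your overall heuristic -- that the answer should be $\asymp\sigma_N/N=N^{-(1-H)}\sqrt{\ell(N)}$ and should come from the $[AGPP]$ persistence machinery run at polynomial rather than logarithmic scale -- is the right intuition. But the route you sketch for the upper bound has two real gaps, and the paper's actual argument is structurally different in a way worth noting.

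First, the prefactor $e^{V(-1)}$: you bound $P_\omega[\tau(N)<\tau(-1)]\le e^{|X_0|}\min(1,e^{-M_N})$ pointwise and then assert that the dependence between $X_0$ and $M_N$ costs only a constant. This is plausible but requires an actual decoupling argument. The paper avoids it entirely by noting that
$e^{V(-1)}\big(\sum_{k=-1}^{N-1}e^{V(k)}\big)^{-1}=\big(\sum_{k=-1}^{N-1}e^{V(k)-V(-1)}\big)^{-1}$
and that $(V(k)-V(-1))_{k=-1,\dots,N-1}$ equals $(V(j))_{j=0,\dots,N}$ in law by stationarity of increments, so the $X_0$ factor disappears at no cost. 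Second, and more importantly, your proposed key input
$P\big[\max_{0\le j\le N-1}V(j)\le h\big]\le C(h+1)^\gamma\,\sigma_N/N$
with a polynomial dependence in the level $h$ is \emph{not} what Theorem 11 of \cite{AGPP} provides (it is stated for a fixed level), and it is not established in this paper either. The paper's way around this is Lemma~\ref{Mol}, a discrete Molchan-type argument: one sets $\Psi(T)=\EE\big[\log\sum_{k<T}e^{V(k)}\big]\sim\kappa T^H\sqrt{\ell(T)}$, computes $\Psi'$, uses time-reversibility to identify $\Psi'(x)$ with $\EE\big[(\sum_{l\le k}e^{V(l)})^{-1}\big]$, and deduces two-sided bounds on this expectation from the known asymptotics of $\Psi$. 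That lemma is precisely the device that lets the paper convert the $h=0$ persistence asymptotic (via $\EE[\sup V]$) into the expectation bound without ever needing the $h$-dependent small-deviation estimate you are implicitly assuming. You should either prove the $h$-uniform bound you invoke (nontrivial -- the naive self-similar rescaling gives the wrong slowly varying correction, as you yourself note) or adopt the $\Psi$-differentiation argument.

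For the lower bound your approach is close in spirit, but the specific event you use -- $\{V(j)\le-2\log(j+1),\ 1\le j\le N-1\}$ -- requires a moving-boundary persistence estimate (staying below a logarithmically decreasing barrier costs only a poly-log factor), which is again nontrivial in the discrete correlated setting. The paper replaces this with a simpler \emph{piecewise-constant} barrier $\phi(k,N)$ equal to $0$ on $[0,a_N]$ and $-\log N$ on $(a_N,N]$ (with $\sigma_{a_N}\sim\log N$), and then applies Slepian's inequality to factorize into three events -- stay below $0$ up to $a_N$, drop below $-\log N$ at $a_N$, stay low afterwards -- each controlled by Theorem~11 of \cite{AGPP} or a one-dimensional Gaussian tail. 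This sidesteps any moving-boundary input and gives the $(\log N)^{-c}$ loss directly. So: correct target and correct toolbox (Slepian, $[AGPP]$ persistence), but the two specific auxiliary estimates your sketch rests on ($h$-uniform upper persistence bound, logarithmic moving-boundary lower bound) are exactly what the paper's Lemma~\ref{Mol} and the piecewise-constant barrier construction are engineered to avoid having to prove.
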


An easy consequence of the previous results is the following estimate on the maximum population size
$\sup_{j\geq 0} Z_j$
of the BPCGE $Z$
before its extinction.

\medskip

\begin{coro1}[{\bf Maximum population size of BPCGE}]
Under assumptions of Theorem \ref{persist},
there exist $N_0\in\mathbb N$ and a slowly varying function at infinity  $L_0$  such that
the maximum population size of the BPCGE $Z$ before its extinction satisfies, for every $N\ge N_0$,
$$
    \frac{(\log N)^{-\left(\frac{1-H}{H}\right)}}{L_0(\log N)}
\leq
    \PP\bigg[ \sup_{j\ge 0} Z_j>N \bigg]
 \leq  (\log N)^{-\left(\frac{1-H}{H}\right)} L_0(\log N).
$$
Moreover if $V=B_H$, there exist $c>0$ and $N_0\in\mathbb N$ such that, for every $N\ge N_0$,
$$
  (\log N)^{-\left(\frac{1-H}{H}\right)} e^{-c\sqrt{\log \log N}}
\leq
    \PP\bigg[ \sup_{j\ge 0} Z_j>N \bigg]
\leq
    (\log N)^{-\left(\frac{1-H}{H}\right)} (\log \log N)^{c}.$$
\end{coro1}
\begin{proof}
As in (\cite{Afa} eq.\ (42)), we note that $\sup_{j\geq 0} Z_j\le \sum_{j\geq 0} Z_j\le \mathcal T
\sup_{j\geq 0} Z_j$.
Consequently,
$
    \mathbb P\big[\sup_{j\geq 0} Z_j>N\big]
\le
    \mathbb P\big[\sum_{j\geq 0}Z_j>N\big]
$
and the upper bound follows from the upper bound of Corollary \ref{CorolBPRETotalSize}.

Moreover
$
    \mathbb P\big[\sup_{j\geq 0}Z_j>N\big]
\ge
    \mathbb P\big[\sum_{j\geq 0} Z_j>{N}^2]-\mathbb P[\mathcal T>{N}\big]
$.
This, Corollary \ref{CorolBPRETotalSize} and \eqref{cinq} lead to the lower bound.
\end {proof}
\noindent{\bf Remark.} The proofs of the upper bounds in the above results remain true if our regular variation assumption
on $r$ fails provided \eqref{formulaVarVn} holds.
For the lower bound, we can replace our regular variation assumption on $r$ by \eqref{formulaVarVn} and $m^2\, r(m)=O(\sigma^2_m)$ (which holds true
in particular if $r$ is decreasing and satisfies
\eqref{formulaVarVn}).

\noindent{\bf Remark.} In Afanasyev \cite{Afa} (see also Vatutin \cite{{Vatutin}} for the stable case) the case of i.i.d.\ environment (where $H=\frac 1 2$) was treated. The above-mentioned correspondence between the random walk in random environment and a branching process in random environment with geometric distributions is used in \cite{Afa} by Afanasyev to deduce the tail of the first hitting time $\tau(-1)$ of $-1$ by the random walk in random environment.  Afanasyev's method is quite efficient since he obtains
$$
    P\big[ \tau(-1)>N \big]
\sim_{N\to+\infty}
    \frac{c}{ \log N}
$$
for some positive constant $c$. However his proof rests on a functional limit theorem for the branching process in random environment
which seems difficult to establish when the reproduction laws of
the different generations are correlated.\\*

We recall the following estimates, that will be useful in the present work.
We shall use the following hitting time formula: If $p<q<r$, then from formula (2.1.4), p.\ 196 in Zeitouni \cite{Zeitouni},
\begin{equation}\label{eqProbaAtteinte}
    P_\omega^q[\tau(r)<\tau(p)]
=
    \Big(\sum_{k=p}^{q-1}e^{V(k)}\Big)\Big(\sum_{k=p}^{r-1}e^{V(k)}\Big)^{-1}.
\end{equation}
Moreover if $g<h<i$, we have (see e.g.\ Lemma 2.2. in Devulder \cite{devulder}  coming from \cite{Zeitouni} p.\ 250)
\begin{equation}\label{InegEsperanceZeitouni}
    E_\omega^{h} [\tau(g)    \wedge\tau(i)]
\leq
    \sum_{k={h}}^{i-1}\sum_{\ell=g}^{k}\Big[\big(1+e^{X_\ell}\big)\exp[V(k)-V(\ell)]\Big].
\end{equation}

The rest of this paper is organized as follows. The upper and lower bounds of Theorem
\ref{persist} are proved in Sections \ref{upperMain} and \ref{lowerMain}, respectively. Section \ref{proofThm2} contains a useful lemma that may be of independent interest and the proof of Theorem \ref{main1}.
\section{Proof of the upper bound in Theorem \ref{persist}}\label{upperMain}
Let $ T(x)$ be the first passage time of the potential $(V(k))_{k\in\NN}$ above/below the level $x\neq 0$.
More precisely, let
$$
    T(x)
:=
    \left\{\begin{array}{ccc}
    \inf\{k\in \NN;\  V(k) \geq x\} & \mbox{if} & x>0,\\*
    \inf\{k\in \NN;\  V(k) \leq x\} & \mbox{if}  & x<0.
\end{array}
\right.
$$
\subsection{First passage times by discrete FBM}
We start by stating a result in the particular case when $V=B_H$.
We set
$$
    \widetilde T(x)
:=
    \left\{\begin{array}{ccc}
    \inf\{k\in\NN;\ B_H(k) \geq x\} & \mbox{if} & x>0,\\*
    \inf\{k\in\NN;\ B_H(k) \leq x\} & \mbox{if}  & x<0.
\end{array}
\right.
$$

In the following theorem, we estimate the probability that the discrete FBM $(B_H(k))_{k\in\NN}$ hits $-x$ before $y$,
for $y$ and large $x$  satisfying some technical conditions.

\medskip
\begin{thm} \label{frank}
Recall that $H\in[\frac 12,1)$.
Let $\alpha>1$.
There exist $c=c(\alpha)>0$ and $x_\alpha>0$ such that for any $y> e$ and any $x>\max(y,x_\alpha)$
such that $\log x \leq [\log (x/y)]^\alpha$, we have
$$
    (x/y)^{-(1-H)/H} \big[\log (x/y) \big]^{-c}
\leq
    P\left[\widetilde T(-x)<\widetilde T(y)\right]
\leq
    c (x/y)^{-(1-H)/H}\big[\log x \big]^{c}.
$$
\end{thm}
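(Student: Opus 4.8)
The plan is to transfer the problem to continuous-time fractional Brownian motion by self-similarity, and then couple the persistence estimates for $B_H$ (of the kind we take from \cite{AGPP}) with Gaussian maximal inequalities. Using $(B_H(ct))_{t}\stackrel{d}{=}(c^{H}B_H(t))_{t}$ with $c=(x/y)^{1/H}$, the problem reduces to estimating, for $z:=x/y$ large, the probability that $B_H$ reaches $-z$ before reaching level $1$. Passing between the discrete first-passage times $\widetilde T(\cdot)$ and their continuous-time analogues costs only a polylogarithmic factor, since $\sup_{t\in[k,k+1]}|B_H(t)-B_H(k)|$ has a Gaussian tail, so a union bound over the $O(\log x)$ time scales that actually occur loses at most a power of $\log x$. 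I would, however, keep $y$ explicit throughout (rather than normalising it to $1$) in order to track the $x/y$ dependence; the technical hypothesis $\log x\le[\log(x/y)]^{\alpha}$ guarantees that $x/y$ is large enough for the polynomial asymptotics to dominate and lets one trade the $[\log x]^{c}$ corrections against powers of $\log(x/y)$.

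\emph{Upper bound.} Decompose $\{\widetilde T(-x)<\widetilde T(y)\}$ according to the dyadic scale of $\widetilde T(-x)$: on $\{2^{k}\le\widetilde T(-x)<2^{k+1}\}\cap\{\widetilde T(-x)<\widetilde T(y)\}$ the potential stays below $y$ on $[0,2^{k}]$ and goes below $-x$ somewhere in $[0,2^{k+1}]$. The first event has probability of order $(2^{k}/y^{1/H})^{-(1-H)}$ up to a polylog factor: after the self-similar rescaling $s\mapsto B_H(2^{k}s)/2^{kH}$ this is the probability that an FBM stays below the small level $y\,2^{-kH}$ on $[0,1]$, which is exactly a persistence estimate. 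For $2^{k}$ much smaller than $x^{1/H}$ the second event forces an atypically deep excursion and is super-polynomially small, $P[\inf_{[0,2^{k+1}]}B_H\le -x]\le C\exp(-cx^{2}2^{-2kH})$ by the Borell--TIS (or Fernique) inequality; for $2^{k}$ of order $x^{1/H}$ or larger the persistence factor alone is already at most a polylog times $(x/y)^{-(1-H)/H}$, and since $1-H>0$ these contributions form a convergent geometric series. Summing over the $O(\log x)$ relevant scales gives $c\,(x/y)^{-(1-H)/H}[\log x]^{c}$.

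\emph{Lower bound.} I would exhibit one favourable scenario: fix an integer $n$ of order $x^{1/H}$ and require that $B_H(k)<y$ for all integers $k\le n$ and that $B_H(n)\le -x$; if both hold then $\widetilde T(-x)\le n<\widetilde T(y)$. The first event has probability at least $(x/y)^{-(1-H)/H}[\log(x/y)]^{-c}$ by the persistence lower bound (the discrete statement follows from the continuous one, since the continuous supremum dominates). Now both $\{\max_{k\le n}B_H(k)<y\}$ and $\{B_H(n)\le -x\}$ are decreasing events in the sample path, and $B_H$ has non-negative covariance for $H\in[\frac12,1)$ (because $t^{2H}+s^{2H}\ge|t-s|^{2H}$ for $s,t\ge 0$ as $2H\ge 1$), so Pitt's correlation inequality gives $P[\max_{k\le n}B_H(k)<y,\ B_H(n)\le -x]\ge P[\max_{k\le n}B_H(k)<y]\cdot P[B_H(n)\le -x]$, and the last factor is bounded below by a positive constant because $n^{H}$ is of order $x$. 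This yields the claimed lower bound.

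The main obstacle is the bookkeeping in the upper-bound sum: one must check that the polylogarithmic errors from the $O(\log x)$ persistence estimates, together with the Gaussian-tail bounds used to dispose of the short-horizon scales, compound to no more than a single power of $\log x$, and that the geometric decay produced by $1-H>0$ genuinely dominates the sum — this is where the hypothesis $\log x\le[\log(x/y)]^{\alpha}$ is used, to control where the dominant scale $2^{k}\approx x^{1/H}$ sits relative to $y$. A secondary, purely technical, point is the careful comparison of discrete-time and continuous-time first-passage times. Note that the long-range dependence for $H>\frac12$ is not itself an obstruction, since it is already absorbed into the persistence inputs and into the verification of Pitt's inequality.
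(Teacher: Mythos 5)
Your proposal is correct, and the lower bound argument is genuinely different from (and arguably cleaner than) the one in the paper. For the upper bound, you and the paper do essentially the same thing: isolate a time horizon near $x^{1/H}$, use a Gaussian tail bound to rule out reaching $-x$ before that horizon, and apply the discrete persistence upper bound (\cite{AGPP}, Thm.~11) to bound the probability of staying below $y$ up to that horizon. The paper makes a single fixed cut at $b=\lfloor x^{1/H}(\log x)^{-q/(2H)}\rfloor$ and adds the two terms, whereas you sweep dyadically over horizons $2^k$ and sum; your bookkeeping concern about the $O(\log\log x)$ transitional scales around $2^k\approx x^{1/H}(\log x)^{-1/(2H)}$ is real but easily resolved (a fixed power $M>1/(2H)$ of $\log x$ separates the two regimes, and the union bound over those scales only costs a further $\log\log x$). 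The paper uses the Khoshnevisan--Lewis maximal inequality \cite{KL} to deduce the Gaussian tail, where you invoke Borell--TIS; both are adequate here. For the lower bound, the routes diverge: the paper writes $P[\widetilde T(-x)<\widetilde T(y)]\ge P[\widetilde T(y)>d]-P[\widetilde T(y)>d,\,\widetilde T(-x)>d]$ with $d=\lfloor x^{1/H}(\log x)^q\rfloor$ and kills the error term via a small-deviation estimate for FBM (Li--Shao), which is precisely where the hypothesis $\log x\le[\log(x/y)]^\alpha$ and the $\alpha$-dependence of $c$ enter. You instead exhibit the single favourable event $\{\max_{k\le n}B_H(k)<y\}\cap\{B_H(n)\le -x\}$ with $n\asymp x^{1/H}$ and factor its probability via Pitt's inequality \cite{Pitt} (both events are decreasing and the covariances are nonnegative). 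This avoids the small-deviation input entirely, and hence avoids the $\alpha$-dependence of the polylogarithmic exponent in the lower bound, which comes out as the fixed $1/(2H)$ from \cite{AGPP}, Thm.~1, rather than the paper's $1/(2H)+\alpha q(1-H)$. One small nit: you justify the nonnegativity of $\mathrm{Cov}(B_H(s),B_H(t))$ for $s,t\ge 0$ by ``$2H\ge 1$'', but $s^{2H}+t^{2H}\ge|s-t|^{2H}$ holds for all $H\in(0,1)$ simply because $\max(s,t)^{2H}\ge|s-t|^{2H}$; the restriction $H\ge 1/2$ is not what makes Pitt applicable here (though it is what makes the increments $X_i$ positively correlated, which the paper uses elsewhere). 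You should also note, as the paper does carefully in its displayed chain around its step labelled (eqLastButOne), that the rescaling $s\mapsto y^{1/H}s$ does not preserve the integer lattice, so passing between the discrete first-passage estimate and the self-similar rescaling requires the rounding/subsampling device there; you flag this as ``a secondary, purely technical point'', which is fair, but it must be done.
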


It is well known that more precise results can be obtained with martingale techniques when $H=1/2$, however these methods fail when $H\neq 1/2$.

\medskip

\begin{proof}
We fix $\alpha>1$. Throughout the proof we consider only $x>y>e$ such that $\log x \leq [\log (x/y)]^\alpha$.

To see the upper bound, define $b=b(x)=\big\lfloor x^{1/H} (\log x)^{-q/(2H)}\big\rfloor$ with $q>1$,
where for $u\in{\mathbb R}$, $\lfloor u\rfloor$ denotes the integer part of $u$, and $\lceil u\rceil=\lfloor u\rfloor+1$. Then
\begin{eqnarray}
    P\big[\widetilde T(-x)<\widetilde T(y)\big]
&\leq &
    P\big[\widetilde T(y) > b\big]
    +
    P\big[\widetilde T(y)\leq b,\, \widetilde T(-x)<\widetilde T(y)\big]
\notag \\
&\leq &
    P\Big[\max_{k=1,2,\dots,b} B_H(k) < y\Big] + P[\widetilde T(-x)<b]. \label{eqn:firstdist1}
\end{eqnarray}
The first term will give the leading order while the second is of lower order. Let us treat the first term:
\begin{eqnarray}
    P\Big[\max_{k=1,2,\dots,b} B_H(k) < y\Big]
&\leq &
    P\Big[\max_{k=1,2,\dots,b} B_H(k) < \lceil y^{1/H}\rceil^{H}\Big]
\nonumber\\
&= &
    P\Big[\max_{k=1,2,\dots,b} \lceil y^{1/H}\rceil^{-H} B_H(k) < 1\Big]
\nonumber\\
&= &
    P\Big[\max_{k=1,2,\dots,b} B_H\big(k/\big\lceil y^{1/H}\big\rceil\big) < 1\Big]
\nonumber\\
&\leq &
    P\Big[\max_{\ell=1,2,\dots,\lfloor b/\lceil y^{1/H}\rceil \rfloor} B_H(\ell) < 1\Big]
\nonumber\\
&\leq &
    c \left\lfloor b/\big\lceil y^{1/H}\big\rceil \right\rfloor^{-(1-H)}
\label{eqLastButOne}\\
&\leq &
    c' ( x/y)^{-(1-H)/H} (\log x)^{\frac{(1-H)q}{2H}}, 
\label{InegProbaMaxBHInferieuraY}
\end{eqnarray}
for some constants $c>0$ and $c'>0$, 
where estimate \eqref{eqLastButOne} comes from Theorem 11 in \cite{AGPP} having used $H\geq 1/2$,
since $b/\big\lceil y^{1/H}\big\rceil$ is large enough when $x$ is large enough under our hypotheses.
To see that the second term in (\ref{eqn:firstdist1}) is of lower order, notice that
\begin{eqnarray*}
    P\big[\widetilde T(-x)<b\big]
& = &
    P\Big[\min_{k=1,\ldots,b-1} B_H(k) \leq -x\Big]
\\
& \leq &
    P\Big[\min_{s \in[0, b]} B_H(s) \leq -x\Big]
=
    P\Big[\max_{s \in[0, b]} B_H(s) \geq x\Big]
\leq
    2 P\big[B_H(b)\geq x\big],
\end{eqnarray*}
where we used Proposition 2.2. in \cite{KL} in the last inequality since $E[X_kX_{j+k}]\geq 0$ for every $j\in\Z$ and $k\in\Z$.
Consequently if $x$ is large enough,
\begin{eqnarray}
    P\big[\widetilde T(-x)<b\big]
& \leq &
    2P\big[b^H B_H(1)\geq x\big]
\leq
    2\exp\big[-x^2/\big(2b^{2H}\big)\big]
\label{InegProbaTxB}
\\
& \leq &
    \exp[-(\log x)^q/4]
\leq
    \exp[-(\log (x/y))^q/4].
\nonumber
\end{eqnarray}
This together with \eqref{eqn:firstdist1} and \eqref{InegProbaMaxBHInferieuraY} ends the proof of the upper bound in the theorem
since $q>1$.

For the lower bound, define $d=d(x)=\big\lfloor x^{1/H} (\log x)^{q}\big\rfloor$ with $q>1$. Note that
\begin{equation}
    P\big[\widetilde T(-x)<\widetilde T(y)\big]
\geq
    P\big[\widetilde T(-x)\leq d, d<\widetilde T(y)\big]
=
    P\big[\widetilde T(y) > d\big] - P\big[ \widetilde T(y)> d, \widetilde T(-x)>d\big].
\label{eqn:seconddist1}
\end{equation}
The first term in the right hand side of (\ref{eqn:seconddist1}) can be treated as follows.
For large $x$,
\begin{eqnarray}
    P\big[\widetilde T(y) > d\big]
& = &
    P\Big[\max_{k=1,\ldots,d} B_H(k) < y\Big]
\nonumber\\
&\geq &
    P\Big[\sup_{t\in[0,d]} B_H(t) < y\Big]
\nonumber\\
&= &
    P\Big[\sup_{t\in[0,d]}  B_H\big(t/y^{1/H}\big) < 1\Big]
\nonumber\\
&= &
    P\Big[\sup_{s\in[0,d/y^{1/H}]}  B_H(s) < 1\Big]
\nonumber\\
&\geq &
    c \big(d/y^{1/H}\big)^{-(1-H)} \big[\log \big(d/y^{1/H}\big)\big]^{-\frac 1{2H}}
\nonumber\\
&\geq &
  c'  (x/y)^{-(1-H)/H} [\log (x/y)]^{-\frac 1{2H}-\alpha q(1-H)},
\label{InegProbaTildeTy}
\end{eqnarray}
for some constants $c>0$ and $c'>0$, where the last but one estimate comes from Theorem~1 in \cite{AGPP} and is valid for any $H\in(0,1)$,
since $d/y^{1/H}$ is large for large $x$.

It remains to be seen that the second term in the right hand side of (\ref{eqn:seconddist1}) is of lower order. First, note that (using $x\geq y$),
we get
\begin{eqnarray}
&&
    P\big[ \widetilde T(y)> d, \widetilde T(-x)>d\big]
\nonumber\\
& \leq &
    P\Big[ \sup_{k=1,\ldots,d} |B_H(k)|\leq x \Big]
\label{ProbaTyT-x1}
\\
& \leq &
    P\Big[\sup_{t\in[0,d]} |B_H(t)|\leq 2x\Big]
    +P\Big[\exists k\in\{0,\ldots d-1\} : \sup_{t\in[0,1]} \big|B_H(k+t)-B_H(k)\big| > x\Big]
\nonumber\\
& \leq &
    P\Big[\sup_{t\in[0,d]} |B_H(t)|\leq 2x\Big]+d\, P\Big[ \sup_{t\in[0,1]} |B_H(t)|>x\Big].
\label{InegProbaMaxBH1d}
\end{eqnarray}
The second term of the previous line is of lower order ($\leq d e^{-c x^2}$ for large $x$),
by standard large deviation estimates for Gaussian processes (e.g.\ Theorem~12.1 p.\ 139 in Lifshits \cite{lifshits}).
The first term is a small deviation probability (observe that $x/d^H\to 0$ as $x\to+\infty$) and can be treated as follows.
There exists $c>0$ and $c'>0$ such that for large $x$,
$$
    P\Big[\sup_{t\in[0,d]} |B_H(t)|\leq 2x\Big]
=
    P\Big[\sup_{s\in[0,1]} |B_H(s)|\leq 2x/d^H\Big]
\leq
    e^{-c (2x/d^H)^{-1/H}}
\leq
    e^{-c'(\log x)^q},
$$
by small deviation results for FBM (see e.g.\ Li and Shao \cite{lishao}, Theorem 4.6 in Section 4.3).
Thus, \eqref{InegProbaMaxBH1d} gives for large $x$,
$$
    P\Big[ \sup_{k=1,\ldots,d} |B_H(k)|\leq x \Big]
\leq
    2e^{-c'(\log x)^q}
\leq
    2e^{-c'(\log (x/y))^q},
$$
which is negligible compared to the right hand side of \eqref{InegProbaTildeTy} since $q>1$.
This, together with \eqref{eqn:seconddist1}, \eqref{InegProbaTildeTy} and \eqref{ProbaTyT-x1}, proves the lower bound.
\end{proof}
\subsection{First passage times of a potential}
We now state the following result for $V$ (less general than Theorem~\ref{frank}, but sufficient for our purposes).

\medskip

\begin{lem} \label{PERSIbis}
Let $a>0$, $\varepsilon\in(0,1)$ and $q>1$. Let $b_N:=\sup\{k\, :\, \sigma_k\le (\log N)(\log\log N)^{-\frac q{2}}\}$ and let $L$ be the slowly varying function at infinity such that
$b_N=(\log N)^{\frac 1 H}L(\log N)$. Then
$$
    P\left[ T(a\log\log N)\leq b_N \leq T\left(-\frac{(1-\varepsilon)\log N}{2}\right)\right]
\ge 1-\frac{\sigma_{b_N}}{b_N}(\log\log N)^{c},
$$
for some $c>0$ and for all $N$ large enough.
\end{lem}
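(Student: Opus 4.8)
The plan is to bound the probability of the complementary event by a union bound over two ``bad'' events, which are then estimated separately. Put $x_N:=a\log\log N$ and $y_N:=\frac{1-\varepsilon}{2}\log N$. From the definition of $T$, the complement of $\{T(x_N)\le b_N\le T(-y_N)\}$ is $\{T(x_N)>b_N\}\cup\{T(-y_N)<b_N\}$, with $\{T(x_N)>b_N\}=\{\max_{1\le k\le b_N}V(k)<x_N\}$ and $\{T(-y_N)<b_N\}\subseteq\{\min_{1\le k\le b_N}V(k)\le -y_N\}$; so it suffices to bound each of $P[\max_{1\le k\le b_N}V(k)<x_N]$ and $P[\min_{1\le k\le b_N}V(k)\le -y_N]$ by $\frac12\frac{\sigma_{b_N}}{b_N}(\log\log N)^{c}$ for all large $N$, with $c>0$ to be chosen at the end. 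Two elementary observations are used throughout: (i) $\sigma_k^2=\sum_{i,j=1}^{k}r(i-j)$ is nondecreasing in $k$ because $r\ge 0$, hence $\sigma_k\le\sigma_{b_N}$ for $1\le k\le b_N$; (ii) since $k\mapsto\sigma_k$ is regularly varying of index $H$, the definition of $b_N$ entails $\sigma_{b_N}\sim(\log N)(\log\log N)^{-q/2}$, while $b_N=(\log N)^{1/H}L(\log N)$ gives $\log b_N=\frac1H\log\log N+o(\log\log N)$ and $\frac{\sigma_{b_N}}{b_N}=b_N^{-(1-H)}\ell(b_N)^{1/2}=(\log N)^{-(1-H)/H+o(1)}$.

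The large-deviation event is handled by a crude union bound. Since each $V(k)$ is centered Gaussian with variance $\sigma_k^2\le\sigma_{b_N}^2$, the Gaussian tail bound yields
$$
P\Big[\min_{1\le k\le b_N}V(k)\le -y_N\Big]\ \le\ \sum_{k=1}^{b_N}P[V(k)\le -y_N]\ \le\ b_N\exp\!\Big(-\frac{y_N^2}{2\sigma_{b_N}^2}\Big).
$$
By (ii) we have $y_N^2/\sigma_{b_N}^2\sim\frac{(1-\varepsilon)^2}{4}(\log\log N)^{q}$, so this is at most $b_N\exp(-c_\varepsilon(\log\log N)^q)$ with $c_\varepsilon:=(1-\varepsilon)^2/16$; as $q>1$ and $\log b_N=O(\log\log N)$, for $N$ large this is $\le\exp(-\frac12 c_\varepsilon(\log\log N)^q)$, which tends to $0$ faster than any fixed negative power of $\log N$ and is thus $o\big(\frac{\sigma_{b_N}}{b_N}(\log\log N)^c\big)$.

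The term $P[\max_{1\le k\le b_N}V(k)<x_N]$ is the core of the argument, and the step I expect to be the main obstacle. Here I would invoke the upper bound for the persistence probability of the potential $V$ supplied by \cite{AGPP} — the counterpart, for a general potential, of the estimate $P[\max_{k\le n}B_H(k)<1]\le c\,n^{-(1-H)}$ (Theorem~11 of \cite{AGPP}) that was used for $B_H$ in the proof of Theorem~\ref{frank}, and which is of the order $\sigma_n/n$ up to a slowly varying factor. The one adjustment needed is to allow the barrier to be $x_N=a\log\log N$ rather than a fixed constant; since by (ii) $x_N$ is of order $\log b_N$, the extra room only inflates the persistence probability by a polynomial factor in $x_N$, i.e.\ by $(\log\log N)^{O(1)}$. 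If only the fixed-barrier estimate is at hand, this passage is made rigorous by a restart argument: cut $[0,b_N]$ at a deterministic time $m$ with $\sigma_m$ of order $x_N$, use stationarity of the increments of $V$ to treat $(V(m+\cdot)-V(m))$ as a fresh potential, and control $V(m)$, losing only a further polylogarithmic factor. Putting this together gives $P[\max_{1\le k\le b_N}V(k)<x_N]\le C\frac{\sigma_{b_N}}{b_N}(\log b_N)^{c_1}\le C'\frac{\sigma_{b_N}}{b_N}(\log\log N)^{c_1}$ for some $C,C',c_1>0$. Combining with the negligible large-deviation term of the previous paragraph and choosing $c>c_1$ large enough to swallow the constants for $N$ large completes the proof; this is also why the exponent $c$ in the statement is left unspecified.
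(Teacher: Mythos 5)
Your decomposition of the complementary event into the two bad events is the same as the paper's, and your treatment of the large-deviation event $\{\min_{1\le k\le b_N}V(k)\le -y_N\}$ is sound: the paper invokes Khoshnevisan--Lewis's maximal inequality to get a factor $2$ instead of your union-bound factor $b_N$, but since $q>1$ and $\log b_N=O(\log\log N)$ both produce a bound decaying faster than any power of $\log N$, so this choice is immaterial.

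The gap is in the estimate of $P[\max_{1\le k\le b_N}V(k)<x_N]$ with $x_N=a\log\log N$. Theorem~11 of \cite{AGPP} controls the persistence probability with a \emph{fixed} barrier ($0$, or $1$ for FBM via scaling), and you correctly identify that the growing barrier needs extra work. However, the ``restart argument'' you sketch --- cut $[0,b_N]$ at a deterministic $m$ with $\sigma_m\asymp x_N$, treat $(V(m+\cdot)-V(m))$ as a fresh potential, and control $V(m)$ --- implicitly assumes the increments after time $m$ can be decoupled from $V(m)$. For the correlated Gaussian potential with $H\neq\frac12$ this is false: $V$ is not Markov, and conditioning on any event involving $V(m)$ changes the law of $(V(m+k)-V(m))_{k\ge 1}$, so the ``fresh potential'' step is not justified. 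The paper's replacement is a Slepian-type comparison run in the opposite direction. Letting $c_N$ satisfy $\sigma_{c_N}\sim a\log\log N$, positive association of the $X_i$'s (Slepian/Pitt) yields
\begin{multline*}
P\Big[\max_{1\le k\le c_N}V(k)\le 0\Big]\; P\big[V(c_N)<-a\log\log N\big]\; P\Big[\max_{0\le k\le b_N}V(k)<a\log\log N\Big]\\
\le\; P\Big[\max_{1\le k\le b_N+c_N}V(k)\le 0\Big],
\end{multline*}
from which one \emph{divides} by the first two factors: the right-hand side is bounded above by $\sigma_{b_N}/b_N$ (Theorem~11 of \cite{AGPP}), the first factor is bounded below by $\sigma_{c_N}/(c_N\sqrt{\log c_N})$ (again Theorem~11 of \cite{AGPP}), and $P[V(c_N)<-a\log\log N]\to P[B_H(1)<-1]>0$. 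Since $c_N$ is only polylogarithmic in $b_N$, this gives exactly $\tfrac{\sigma_{b_N}}{b_N}(\log\log N)^{O(1)}$. Your proof needs this (or an equivalent) comparison in place of the restart heuristic.
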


\medskip

\begin{proof}
We shall show the following two estimates which yield the claim:

First,
\begin{equation}
\label{eqLemmaFrankbis1r}
    \exists \kappa>0,\quad
    P\left[ T\left(-\frac{(1-\varepsilon)\log N}{2}\right) < b_N\right]
\le
    O\left(e^{-\kappa(\log\log N)^{q}}\right)
\end{equation}
as $N\to+\infty$.

Second, there exists $c>0$ such that
for $N$ large enough,
\begin{equation}
\label{eqLemmaFrankbis2r}
    P\big[ T(a\log\log N)> b_N\big]
\le
    \frac{\sigma_{b_N}}{b_N}(\log\log N)^{c}.
\end{equation}

Due to \cite[Prop 2.2]{KL} (Remark that the proof of this result holds when only a finite number of random variables is
considered), we have for $N$ large enough,
\begin{eqnarray*}
    P\left[ T\left(-\frac{(1-\varepsilon) \log N}{2}\right) < b_N\right]
& \leq &
    P\left[\max_{k=0,...,b_N}(-V(k))\geq (1-\varepsilon)(\log N)/2\right]
\\
& \leq &
    2P\left[-V(b_N)\geq (1-\varepsilon)(\log N)/2\right]
\\
& = &
    2P\left[\sigma_{b_N}V(1) \geq (1-\varepsilon)(\log N)/2\right]
\\
& \le &
    2\exp\left(-\frac 1{8\sigma^2_{b_N}}(1-\varepsilon)^2(\log N)^2\right)
\\
&\le&
    2\exp\left(-\frac 1{8}(1-\varepsilon)^2(\log\log N)^{q }\right).
\end{eqnarray*}
This gives \eqref{eqLemmaFrankbis1r}.
For  \eqref{eqLemmaFrankbis2r}, note that
\begin{equation}\label{EqProbaTaloglogN}
    P\left[ T(a\log\log N)> b_N\right]
=
    P\left[\max_{k=0,...,b_N}V(k)<a\log\log N\right].
\end{equation}
Let
$c_N$ be so that $\sigma_{c_N}\sim a\log\log N$ as $N\to+\infty$.
Then
\begin{equation}\label{persiV1}
\lim_{N\rightarrow +\infty} P\left[V(c_N)<-a\log\log N\right]
= P\left[B_H(1)<-1\right]>0.
\end{equation}
Due to Slepian lemma,
the $X_i$'s are positively associated and so
\begin{multline}\label{persiV2}
P\left[\max_{k=1,...,c_N}V(k)\le 0\right]P\left[V(c_N)<-a\log\log N\right]P\left[\max_{k=0,...,b_N}V(k)<a\log\log N\right]\\
\le P\left[\max_{k=1,...,c_N}V(k)\le 0,\ V(c_N)<-a\log\log N,\ \max_{k=1,...,b_N}(V(c_N+k)-V(c_N))<a\log\log N\right]\\
\le P\left[\max_{k=1,...,b_N+c_N}V(k)\le 0\right].
\end{multline}
But, due to Theorem 11 in \cite{AGPP},
\begin{equation}\label{persiV3}
\limsup_{n\rightarrow +\infty}
   \frac{b_N}{\sigma_{b_N}}P\left[\max_{k=1,...,b_N+c_N}V(k)\le 0\right]=
\limsup_{n\rightarrow +\infty}
   \frac{b_N+c_N}{\sigma_{b_N+c_N}}P\left[\max_{k=1,...,b_N+c_N}V(k)\le 0\right]<\infty
\end{equation}
and
\begin{equation}\label{persiV4}
\liminf_{N\rightarrow +\infty}\frac{c_N\sqrt{\log c_N}}{\sigma_{c_N}}
    P\left[\max_{k=1,...,c_N}V(k)\le 0\right]>0.
\end{equation}
We conclude the proof of \eqref{eqLemmaFrankbis2r} by gathering \eqref{EqProbaTaloglogN}, \eqref{persiV1},  \eqref{persiV2}, \eqref{persiV3} and \eqref{persiV4}.
\end{proof}

%
%
%
%
%


\subsection{Random walk in a bad environment}\label{SubSectionBadEnv}


Let $a\in (1,+\infty)$, $q>1$ and $\varepsilon \in (0,1)$.
Define, as before,  $b_N:=\sup\{k\ :\ \sigma_k\le (\log N)(\log\log N)^{-\frac q{2}}\}$.
When $V=B_H$, $b_N=b(\log N)$ with the notation of the proof of Theorem
\ref{frank} before \eqref{eqn:firstdist1}.
We define, for $N\geq 3$, a set $\mathcal{B}_N$ of ``bad environments'' (that happen with large probability) as follows
$$
    \mathcal{B}_N
:=
    \mathcal{B}_N^{(1)}\cap \mathcal{B}_N^{(2)},
$$
where
\begin{eqnarray*}
    \mathcal{B}_N^{(1)}
& := &
    \left\{  T\left(a \log \log N\right)  \leq b_N \leq T\left(-\frac{(1-\varepsilon)}{2}\log N\right)\right\},
\\
    \mathcal{B}_N^{(2)}
& := &
    \bigcap_{i=-1}^{b_N}\left\{ \big| V(i) - V(i-1) \big| \leq \frac 1 2 \log \log N\right\}.
\end{eqnarray*}


We first study the behavior of a random walk in a bad environment, and show that its quenched probability of persistence is small.

\medskip

\begin{lem}\label{uniform}
Let $a\in (1,+\infty)$. We have for large enough $N$,
$$
    \forall \omega\in\mathcal{B}_N,
\qquad
    P_{\omega} \Big[ \min_{k=1,\ldots, N} S_k > -1 \Big]
=
    P_{\omega}\Big[ \tau(-1)>N\Big]
\leq
    \frac{2}{(\log N)^{a-1}}.
$$
\end{lem}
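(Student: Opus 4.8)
The plan is to exploit the hitting-time formula \eqref{eqProbaAtteinte} together with the expectation bound \eqref{InegEsperanceZeitouni}, applied on the environment restricted by $\mathcal{B}_N$. First I would observe that on $\mathcal{B}_N^{(1)}$ the potential $V$ reaches the level $a\log\log N$ at some site $m:=T(a\log\log N)\le b_N$, and stays above $-\tfrac{(1-\varepsilon)}{2}\log N$ up to time $b_N$; moreover on $\mathcal{B}_N^{(2)}$ the increments $|X_i|$ are at most $\tfrac12\log\log N$ for $-1\le i\le b_N$, so in particular $V(m)\le a\log\log N+\tfrac12\log\log N$. The strategy is to decompose the event $\{\tau(-1)>N\}$ according to whether the walk reaches the site $m$ before time $\tau(-1)$: either the walk hits $-1$ before $m$ (a rare event, controlled by \eqref{eqProbaAtteinte}, since the potential barrier $V(m)$ is only logarithmic in $\log N$ while the "valley" to the left is much deeper, of order $\log N$), or the walk does reach $m$, from which point it must survive a long time, and there the expected hitting time \eqref{InegEsperanceZeitouni} is small enough that a Markov inequality finishes the job.

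More concretely: by the strong Markov property, $P_\omega[\tau(-1)>N]\le P_\omega[\tau(m)<\tau(-1)] + P_\omega^m[\tau(-1)\wedge\tau(b_N+1)>N-\textrm{(something)}]$, or more cleanly one bounds $P_\omega[\tau(-1)>N]\le P_\omega[\tau(m)<\tau(-1)]+ P_\omega[\tau(-1)\wedge \tau(m)\text{-type time} >N]$. For the first term, formula \eqref{eqProbaAtteinte} with $p=-1$, $q=0$, $r=m$ gives $P_\omega^0[\tau(m)<\tau(-1)] = \big(\sum_{k=-1}^{-1}e^{V(k)}\big)\big(\sum_{k=-1}^{m-1}e^{V(k)}\big)^{-1} \le e^{V(-1)}\big(e^{V(m-1)}\big)^{-1}$; using $|V(-1)|\le\tfrac12\log\log N$ and the fact that $V(m-1)$ is close to $a\log\log N$ (within $\tfrac12\log\log N$, since $V(m)\ge a\log\log N$ and the last increment is $\le\tfrac12\log\log N$ in absolute value), this is at most $(\log N)^{-(a-1)}$ up to constants — actually one should be a bit careful and keep a clean factor, aiming for $\le (\log N)^{-(a-1)}$. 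Wait — I would instead estimate $P_\omega^0[\tau(m)>\tau(-1)]$, i.e. reaching $-1$ before $m$: this is $1-P_\omega^0[\tau(m)<\tau(-1)]$, so that is not small. The quantity that is small is $P_\omega^0[\tau(m)<\tau(-1)]$ when the barrier is \emph{high}; here $V$ goes \emph{up} to reach $m$, so indeed crossing to $m$ is the rare event and persistence essentially forces the walk to stay in $[0,m-1]$ or to be killed. So the correct split is: on the event $\{\tau(-1)>N\}$, either $\tau(m)>\tau(-1)$, which forces the walk to oscillate in $\{0,\dots,m-1\}$ for time $>N$ — but that is not rare; hence one must instead use the expectation bound to say that $E_\omega^0[\tau(-1)\wedge\tau(m)]$ is small, so staying that long without hitting $-1$ or $m$ is rare; and separately $P_\omega^0[\tau(m)<\tau(-1)]$ handles the other branch since once at $m$ the walk is deep in a region where it will be quickly pulled back to $-1$ (again via \eqref{InegEsperanceZeitouni} applied on $[-1,b_N]$, the deep valley).

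So the clean decomposition I would use is
$$
P_\omega^0[\tau(-1)>N] \le P_\omega^0[\tau(-1)\wedge\tau(m) > N] + P_\omega^0\big[\tau(m)<\tau(-1)\big]\,\sup_{m}P_\omega^m[\tau(-1)>N/2],
$$
or more simply bound $P_\omega^0[\tau(-1)>N]\le \tfrac{1}{N}E_\omega^0[\tau(-1)\wedge\tau(m)] + P_\omega^0[\tau(m)<\tau(-1)]+ \tfrac{2}{N}\sup E_\omega^m[\tau(-1)\wedge\tau(b_N+1)]$. Each expectation is estimated by \eqref{InegEsperanceZeitouni}: the double sum $\sum_{k}\sum_\ell (1+e^{X_\ell})e^{V(k)-V(\ell)}$ over indices in $[-1,b_N]$ is, on $\mathcal{B}_N$, at most $b_N^2\cdot e^{2\cdot\frac12\log\log N}\cdot e^{\max V - \min V}$ where $\max_{[0,b_N]}V\le a\log\log N + \frac12\log\log N$ and $-\min V \le \frac{(1-\varepsilon)}{2}\log N$ (from the definition of $b_N\le T(-\frac{(1-\varepsilon)}{2}\log N)$); since $b_N=(\log N)^{1/H}L(\log N)$ is polylogarithmic in $N$, the whole thing is at most $(\log N)^{C}e^{\frac{(1-\varepsilon)}{2}\log N}= (\log N)^C N^{(1-\varepsilon)/2} = o(N/(\log N)^{a})$. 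Dividing by $N$ makes these terms $o((\log N)^{-(a-1)})$, negligible. The remaining term $P_\omega^0[\tau(m)<\tau(-1)]\le e^{V(-1)-V(m-1)}\le e^{(\frac12+\frac12)\log\log N - a\log\log N} = (\log\log N)^{1-a}$ — hmm, that gives a $\log\log$ decay, not $(\log N)^{-(a-1)}$. I need to recheck the level: the barrier the walk must climb is $V(m)\approx a\log\log N$, giving decay $(\log\log N)^{-(a-1)}$-ish, which is weaker than the claimed $2(\log N)^{-(a-1)}$.

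The resolution — and the point I'd think hardest about — is that the claimed bound $2(\log N)^{-(a-1)}$ is quite weak (it is $\ge 2(\log\log N)^{-(a-1)}$ for large $N$), so the crude estimate $P_\omega^0[\tau(m)<\tau(-1)]\le (\log\log N)^{1-a}$ is \emph{more} than enough; I was confusing myself. Let me just state the final assembly: combine (i) $P_\omega^0[\tau(m)<\tau(-1)]\le e^{|V(-1)|+|V(m-1)|}\big/\big(\text{partial sum}\big)$ — actually simplest: $P_\omega^0[\tau(m)<\tau(-1)] = e^{V(-1)}/\sum_{k=-1}^{m-1}e^{V(k)} \le e^{V(-1)}/e^{V(m-1)}$; since $V(m-1)\ge V(m)-|X_m|\ge a\log\log N - \frac12\log\log N$ and $V(-1)\le \frac12\log\log N$, this is $\le e^{(1-a)\log\log N} = (\log\log N)^{1-a}\le (\log N)^{1-a}$ for large $N$ — wait that inequality goes the wrong way. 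OK, $(\log\log N)^{1-a} \le 1 \le$ nothing useful vs $(\log N)^{1-a}$. Since $1-a<0$ and $\log\log N \le \log N$, we have $(\log\log N)^{1-a}\ge (\log N)^{1-a}$, so this does \emph{not} immediately give the claimed bound. Therefore I would instead choose the barrier crossing to a deeper level: note $\mathcal{B}_N^{(1)}$ also forces $V$ to \emph{not} go below $-\frac{(1-\varepsilon)}{2}\log N$ before $b_N$, but that is a lower barrier on the left. The genuinely useful estimate must come from the expectation bound alone: I claim $P_\omega^0[\tau(-1)>N]\le \tfrac1N E_\omega^0[\tau(-1)\wedge\tau(b_N+1)] + P_\omega^0[\tau(b_N+1)<\tau(-1)]$, and the second term, by \eqref{eqProbaAtteinte}, equals $e^{V(-1)}/\sum_{k=-1}^{b_N}e^{V(k)}\le e^{V(-1)}/e^{V(m-1)}$ with $m=T(a\log\log N)$, but to get $(\log N)^{-(a-1)}$ one should pick the threshold in $\mathcal{B}_N^{(1)}$ to be at level $\approx a\log\log N$ so that $e^{V(m-1)}\ge (\log N)^{a}/e^{O(\log\log N)}$ — and indeed $e^{a\log\log N}=(\log N)^{a}$, so $e^{V(-1)}/e^{V(m-1)}\le e^{\frac12\log\log N}/((\log N)^a e^{-\frac12\log\log N}) = (\log\log N)(\log N)^{-a}\le (\log N)^{-(a-1)}$ for large $N$. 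That works. So the main obstacle, which the argument must handle carefully, is exactly this bookkeeping: showing that on $\mathcal{B}_N$ the barrier height $V(m-1)$ is genuinely $\ge a\log\log N - \frac12\log\log N$ (hence $e^{V(m-1)}\ge (\log N)^{a}(\log N)^{-1/2}$, since $e^{\frac12\log\log N} = (\log N)^{1/2}$), and that the expectation term is $o(N(\log N)^{-(a-1)})$ which follows from $b_N$ being polylogarithmic and the valley depth being only $\frac{(1-\varepsilon)}{2}\log N \ll \log N$; then adding the two contributions and using $\varepsilon>0$ gives the bound $2(\log N)^{-(a-1)}$ for $N$ large.
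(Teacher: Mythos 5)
Your overall plan---barrier at $m:=T(a\log\log N)$, formula \eqref{eqProbaAtteinte} for the probability of climbing it, formula \eqref{InegEsperanceZeitouni} plus Markov for the event that the walk survives $N$ steps without crossing---is exactly the paper's strategy, and the bookkeeping for the barrier term ($e^{V(-1)}/e^{V(m)}\le (\log N)^{1/2-a}$, or your variant with $V(m-1)\ge(a-\tfrac12)\log\log N$) is correct.

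However, your final assembly has a genuine gap: you take the right cutoff at $b_N+1$ rather than at $m+1$, and in estimating $E_\omega^0[\tau(-1)\wedge\tau(b_N+1)]$ via \eqref{InegEsperanceZeitouni} you assert that $\max_{0\le k\le b_N}V(k)\le a\log\log N+\tfrac12\log\log N$ on $\mathcal{B}_N$. This is false: $\mathcal{B}_N^{(1)}$ only forces $V$ to \emph{hit} level $a\log\log N$ at some point $\le b_N$ (and to stay above $-\tfrac{(1-\varepsilon)}{2}\log N$), but it puts no upper barrier on $V$ over $[0,b_N]$. On $\mathcal{B}_N^{(2)}$ the only a priori control is $\max_{[0,b_N]}V\le \tfrac12 b_N\log\log N$, which makes $e^{\max V-\min V}$ super-polynomial in $N$ and destroys the Markov bound. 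The fix is precisely what the paper does: cut off the excursion at $\tau(m+1)$, not at $\tau(b_N+1)$. By definition of $m=T(a\log\log N)$ one has $V(k)<a\log\log N$ for $k<m$, and on $\mathcal{B}_N^{(2)}$ one has $V(m)\le a\log\log N+\tfrac12\log\log N\le 2a\log\log N$, so $\max_{-2\le\ell\le k\le m}(V(k)-V(\ell))\le 2a\log\log N+\tfrac{1-\varepsilon}{2}\log N+O(\log\log N)$, and then $\tfrac1N E_\omega^0[\tau(-1)\wedge\tau(m+1)]\le c\,(m+2)^2 N^{-\varepsilon}(\log N)^{4a}=o((\log N)^{1-a})$ because $m\le b_N$ is polylogarithmic in $N$. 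With that replacement (and dropping the unnecessary third term $\tfrac2N\sup E_\omega^m[\cdots]$, since $P_\omega[\tau(m)<\tau(-1),\ \tau(-1)>N]\le P_\omega[\tau(m)<\tau(-1)]$ already suffices) your argument coincides with the paper's proof.
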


\smallskip

\begin{proof}
Let $a>1$, $N\geq 3$, $\omega\in\mathcal{B}_N$ and $\alpha:=T\left(a \log \log N\right)$.  Let us decompose
\begin{eqnarray}
    P_{\omega}\big[ \tau(-1)>N\big]
& \leq &
    P_{\omega} \big[\tau(-1)\wedge \tau(\alpha+1) >N ; \ \tau(-1)<\tau(\alpha +1)\big]
    +P_{\omega}\big[\tau(-1)>\tau(\alpha+1)\big]
\nonumber\\
&=: &
    P_1(\omega) + P_2(\omega).
\label{eqDefP1P2}
\end{eqnarray}
From \eqref{eqProbaAtteinte},
using the definition of $\alpha$ and the fact that $\omega\in \mathcal{B}_N^{(2)}$, we get
\begin{equation}\label{InegP2}
    P_2(\omega)
\leq
    e^{V(-1)}\bigg(\displaystyle\sum_{k=-1}^{\alpha}e^{V(k)}\bigg)^{-1}
\leq
    \frac{e^{V(-1)}}{e^{V(\alpha)}}\leq (\log N)^{1-a}.
\end{equation}
Note that
$
    1+e^{X_\ell}
=
    e^0+e^{V(\ell)-V(\ell-1)}
\leq
    2 \exp\big[\max_{-2\leq j\leq k\leq \alpha}(V(k)-V(j))\big]
$
for every $-1\leq \ell \leq \alpha$.
Moreover,
$V(k)\leq a\log\log N +|V(\alpha)-V(\alpha-1)|\leq 2a \log\log N$
for all $0\leq k \leq \alpha$,
and
$
    \alpha
<
    T[-(1-\varepsilon)(\log N)/2]
$.
Hence from \eqref{InegEsperanceZeitouni} and Markov's inequality,
\begin{eqnarray*}
    P_1(\omega)
\leq
    \frac{1}{N} E_{\omega} [ \tau(-1)\wedge \tau(\alpha+1)]
& \leq &
    \frac{2}{N} (\alpha +2)^2 \exp\Big\{ 2 \max_{-2\leq \ell\leq k\leq \alpha} (V(k)-V(\ell))\Big\}\\
& \leq &
    \frac{2}{N} (\alpha +2)^2 \exp\big\{ (1-\varepsilon) \log N + 4a \log \log N\big\}
\\
& = &
    \frac{2}{N} (\alpha +2)^2 N^{(1-\varepsilon)} (\log N)^{4a}\\*
& \leq &
    c N^{-{\varepsilon}} (\log N)^{4a+\frac 2H}(L(\log N))^2,
\end{eqnarray*}
uniformly for large $N$, where we used $\omega\in \mathcal{B}_N$ in the second step
and $\omega\in \mathcal{B}_N^{(1)}$ and $b_N=(\log N)^{\frac 1 H}L(\log N)$ (see Lemma \ref{PERSIbis}) in the last step.
This together with \eqref{eqDefP1P2} and \eqref{InegP2} proves the lemma.
\end{proof}

\medskip

We now treat the probability of the bad environment.

\begin{lem}\label{BN}
There exists $c>0$ such that, for $N$ large enough,
\begin{equation}\label{eqLemmaBNCaseV}
    P\left[\mathcal{B}_N^{c}\right]
\leq
   \frac{\sigma_{b_N}}{b_N}(\log\log N)^c.
\end{equation}
\end{lem}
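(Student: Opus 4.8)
The plan is to split $\mathcal{B}_N^c = \big(\mathcal{B}_N^{(1)}\big)^c \cup \big(\mathcal{B}_N^{(2)}\big)^c$ and estimate the two pieces separately, the first being essentially free and the second being a routine Gaussian tail computation. For the first piece, observe that $\mathcal{B}_N^{(1)} = \big\{T(a\log\log N)\le b_N\le T\big(-\tfrac{(1-\varepsilon)}{2}\log N\big)\big\}$ is exactly the event whose probability is estimated in Lemma \ref{PERSIbis} (with the same $a$, $\varepsilon$, $q$). Hence that lemma gives directly
$$
    P\big[\big(\mathcal{B}_N^{(1)}\big)^c\big]
\le
    \frac{\sigma_{b_N}}{b_N}(\log\log N)^{c_1}
$$
for some $c_1>0$ and all $N$ large enough.

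For the second piece, recall that $V(i)-V(i-1)=X_i$ is a standard Gaussian random variable. Using the elementary tail bound $P[|X_i|>t]\le 2e^{-t^2/2}$ (valid for $t$ large) with $t=\tfrac12\log\log N$, and a union bound over the $b_N+2$ indices $i\in\{-1,0,\dots,b_N\}$, we get, for $N$ large enough,
$$
    P\big[\big(\mathcal{B}_N^{(2)}\big)^c\big]
\le
    \sum_{i=-1}^{b_N} P\Big[|X_i|>\tfrac12\log\log N\Big]
\le
    2(b_N+2)\exp\Big(-\tfrac18(\log\log N)^2\Big).
$$
It then remains to compare this with the bound for the first piece. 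By Lemma \ref{PERSIbis} one has $b_N=(\log N)^{1/H}L(\log N)$ with $L$ slowly varying, so $\log b_N = O(\log\log N)$, which makes $b_N\exp\big(-\tfrac18(\log\log N)^2\big)$ decay super-polynomially in $\log N$; on the other hand, by \eqref{formulaVarVn} and $H<1$, the quantity $\sigma_{b_N}/b_N = b_N^{H-1}\sqrt{\ell(b_N)}$ is only polynomially small in $\log N$ (up to slowly varying corrections). Therefore $P\big[\big(\mathcal{B}_N^{(2)}\big)^c\big] = o\big(\sigma_{b_N}/b_N\big)$, and adding the two bounds and, if necessary, enlarging $c_1$ to some $c>0$, yields \eqref{eqLemmaBNCaseV}.

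I do not expect a genuine obstacle here: once one notices that $\mathcal{B}_N^{(1)}$ is literally the event treated in Lemma \ref{PERSIbis} and that $\mathcal{B}_N^{(2)}$ only involves the Gaussianity of the single increments $X_i$, the argument is a one-line union bound plus the observation that a super-polynomially small term is negligible against a polynomially small one. The only point deserving a moment's care is that the slowly varying factors hidden in $b_N$ and in $\sigma_{b_N}/b_N$ cannot spoil this comparison, which is immediate from the fact that slowly varying functions are dominated by every positive power.
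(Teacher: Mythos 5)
Your proof is correct and follows essentially the same route as the paper: split $\mathcal{B}_N^c$ into the two pieces, invoke Lemma~\ref{PERSIbis} verbatim for $(\mathcal{B}_N^{(1)})^c$, and use a union bound with the Gaussian tail of the increments for $(\mathcal{B}_N^{(2)})^c$, noting the latter is negligible. The paper's proof is identical in substance (it records the second bound as $\leq(\log N)^{-1/H}$ and leaves the final comparison implicit, which you instead spell out).
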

\begin{proof}
It is enough to upper bound each probability $P\left[(\mathcal{B}_N^{(i)})^c\right]$, $i=1,2$.
Since $(V(i)-V(i-1))$ follows a standard Gaussian distribution for every $i\in\ZZ$, we have for large $N$,
\begin{equation}\label{InegProbaBN2}
    P\left[(\mathcal{B}_N^{(2)})^c\right]
\leq
    2(b_N+2)
    \exp\big(-(\log \log N)^2/8\big)
\leq
    (\log N)^{-\frac{1}{H}}.
\end{equation}
The upper bound of $P\left[(\mathcal B_N^{(1)})^c\right]$ comes from Lemma \ref{PERSIbis}
in the general case, that is when $V$ is not necessarily $B_H$, which proves \eqref{eqLemmaBNCaseV}.
\end{proof}

The proof of the upper bound of Theorem \ref{persist} directly follows from Lemma \ref{uniform} and \ref{BN}. Indeed,
\begin{eqnarray*}
    \PP\big[ \tau(-1) > N \big]
& = &
    \int_{\mathcal{B}_N} P_{\omega} \big[ \tau(-1) > N \big] {\rm d} P(\omega)
    +
    \int_{\mathcal{B}_N^c} P_{\omega} \big[ \tau(-1) > N \big] {\rm d} P(\omega)
\\*
&\leq &
    \frac{2}{(\log N)^{a-1}}
    +
    P(\mathcal{B}_N^c ),
\end{eqnarray*}
where we used Lemma \ref{uniform}. Let us choose $a > 1 + \frac{1-H}{H}$. The upper bound of Theorem \ref{persist} follows from Lemma \ref{BN}.
\section{Proof of the lower bound in Theorem \ref{persist}}\label{lowerMain}
\subsection{Good environments}
Let $\gamma:=T(1)$ and $\varepsilon>0$.
For $N\geq 3$, we consider a set $\mathcal G_N$ of (rare) good environments:
$$
    {\mathcal G}_N
:=
    \mathcal G_N^{(1)}\cap \mathcal{G}_N^{(2)} \cap \mathcal{G}_N^{(3)}\cap \mathcal{G}_N^{(4)},
$$
with
\begin{eqnarray*}
    \mathcal{G}_N^{(1)}
& := &
    \Big\{\beta_N:=T(-2 \log N)<\gamma\Big\},
\\
    \mathcal{G}_N^{(2)}
& := &
    \{\gamma< (\log N)^{\frac {1+\varepsilon}H}\},
\\
    \mathcal{G}_N^{(3)}
& := &
    \left\{\sup_{|k|\le (\log N)^{\frac {1+\varepsilon}H}}|X_k|\le \sqrt{\frac{(4-2H)(1+\varepsilon)}{H}\log\log N} \right\},
\\
    \mathcal{G}_N^{(4)}
& := &
    \left\{ \Big(\sum_{k=0}^{\beta_N-1}e^{V(k)}\Big)^{-1} \geq f(N)\right\},
\end{eqnarray*}
where $f(N):= \frac{1}{\kappa (\log \log N)^2}$ with
$\kappa:=5(2/H)^2$.
If $\omega \in  \mathcal{G}_N$, we say that it is a ``good environment''.

\begin{figure}
 \includegraphics[width=0.9\textwidth]{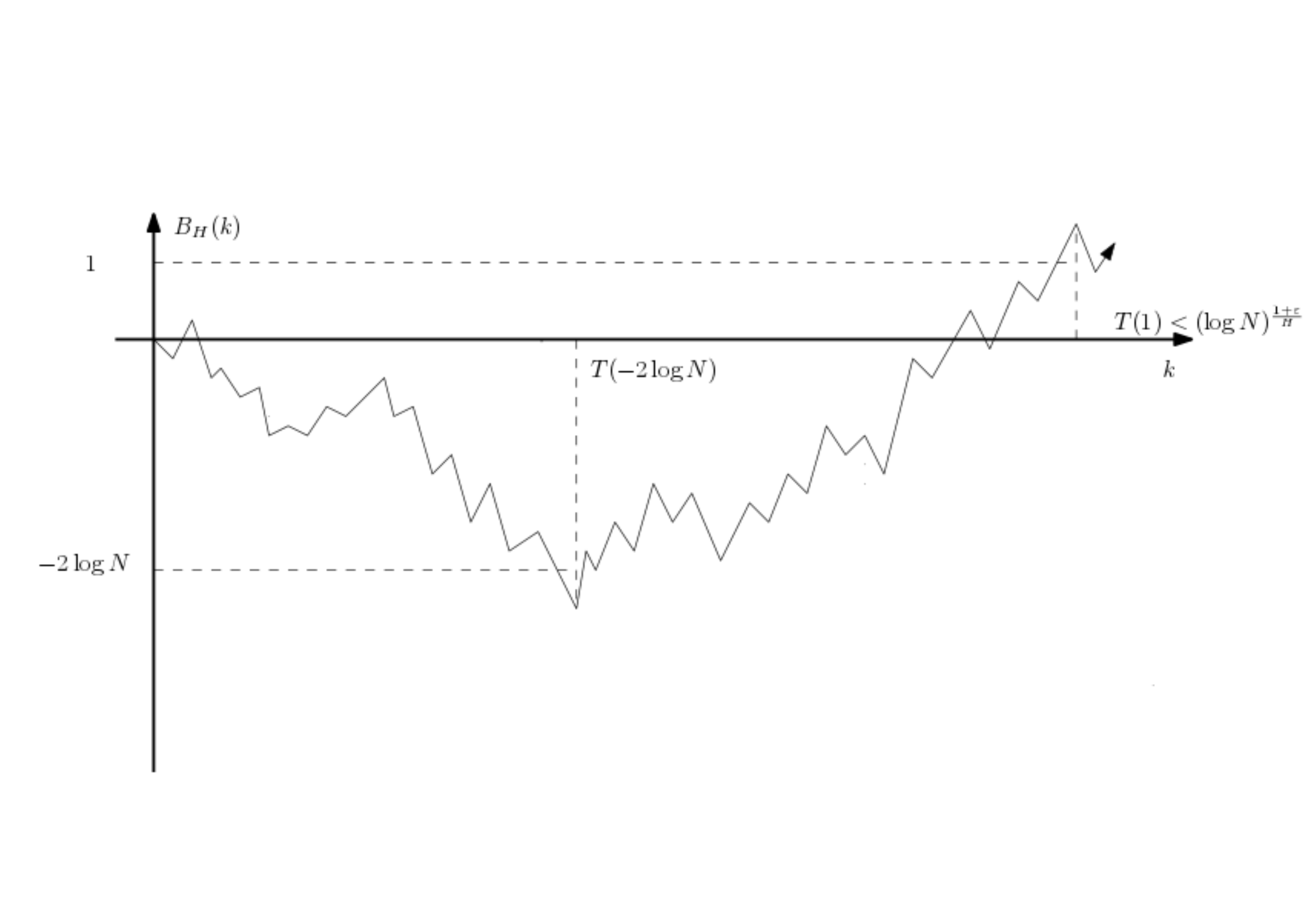}
\caption{Sketch of a good environment $\omega\in\mathcal{G}_N$.
\label{good_env}}
\end{figure}

\subsection{Random walks in good environments}
We shall prove in Lemma \ref{lem:ppviagoodenv} that the persistence probability is directly related to the probability of good environments. So we just need to give a lower bound for $P\big(\mathcal{G}_N\big)$.

\begin{lem} \label{lem:ppviagoodenv}
With the notation for $\mathcal{G}_N$ defined above, we have for $N$ large enough
\begin{equation}
    \mathbb P[\tau(-1)>N]
\geq     e^{-c \sqrt{\log\log N}} P\big(\mathcal{G}_N\big).
\label{InegLowerBoundPersistenceGN}
\end{equation}
\end{lem}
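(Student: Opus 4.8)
The plan is to lower-bound $\PP[\tau(-1)>N]$ by restricting the annealed average to the good environments $\mathcal G_N$ and, for each such $\omega$, giving a quenched lower bound of the form $P_\omega[\tau(-1)>N]\geq e^{-c\sqrt{\log\log N}}$. Combining these two facts immediately gives \eqref{InegLowerBoundPersistenceGN}, since $\PP[\tau(-1)>N]=\int P_\omega[\tau(-1)>N]\,dP(\omega)\geq \int_{\mathcal G_N}P_\omega[\tau(-1)>N]\,dP(\omega)\geq e^{-c\sqrt{\log\log N}}P(\mathcal G_N)$. So the whole content is the quenched estimate for $\omega\in\mathcal G_N$.

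To get the quenched lower bound, fix $\omega\in\mathcal G_N$ and recall $\gamma=T(1)$, $\beta_N=T(-2\log N)$, with $\beta_N<\gamma$ on $\mathcal G_N^{(1)}$. The first step is a standard decomposition: the walk started at $0$ will, before time $\tau(-1)$, first either reach $\gamma$ (a site where the potential is high, $V(\gamma)\geq 1$) or hit $-1$; on $\{S$ reaches $\gamma$ before $-1\}$, from $\gamma$ the potential barrier to the left is favorable, so the walk has a good chance of staying positive for a very long time. Quantitatively I would write $P_\omega[\tau(-1)>N]\geq P_\omega[\tau(\gamma)<\tau(-1)]\cdot\inf\{\dots\}$ or, more cleanly, bound $P_\omega[\tau(-1)\leq N]$ from above by $P_\omega[\tau(-1)<\tau(\gamma)]+P_\omega[\tau(\gamma)<\tau(-1),\ \tau(-1)\leq N]$ and control each term. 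For the first term, \eqref{eqProbaAtteinte} with $p=-1$, $q=0$, $r=\gamma$ gives $P_\omega[\tau(-1)<\tau(\gamma)]=e^{V(-1)}\big(\sum_{k=-1}^{\gamma-1}e^{V(k)}\big)^{-1}$, and using $\mathcal G_N^{(4)}$ together with the boundedness of the single increments from $\mathcal G_N^{(3)}$ (so that $e^{V(-1)}\leq e^{|X_{-1}|+|X_0|}\leq e^{c\sqrt{\log\log N}}$) this is at most $e^{c\sqrt{\log\log N}}f(N)=e^{c\sqrt{\log\log N}}/(\kappa(\log\log N)^2)$ — wait, that is not small; rather I want $1-P_\omega[\tau(-1)<\tau(\gamma)]$ to not be too small, so I should instead note $P_\omega[\tau(\gamma)<\tau(-1)]=\big(\sum_{k=-1}^{0}e^{V(k)}\big)\big(\sum_{k=-1}^{\gamma-1}e^{V(k)}\big)^{-1}\geq e^{V(-1)}f(N)\geq e^{-c\sqrt{\log\log N}}(\log\log N)^{-2}$, using $\mathcal G_N^{(3)}$ and $\mathcal G_N^{(4)}$.

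For the second term, once at $\gamma$, I use the reflected/shifted walk: by the Markov property under $P_\omega$, from $\gamma$ the probability that the walk comes back down to $-1$ before a large time $N$ is small because the potential on $[\beta_N,\gamma]$ drops by at least $1+2\log N$ (since $V(\gamma)\geq 1$ and $V(\beta_N)\leq -2\log N$), so there is a strong drift confining the walk. Concretely, $P_\omega^\gamma[\tau(-1)\leq N]\leq P_\omega^\gamma[\tau(\beta_N)<\tau(2\gamma)]+\frac1N E_\omega^\gamma[\tau(\beta_N)\wedge\tau(2\gamma)]$ or similar; the first piece is exponentially small by \eqref{eqProbaAtteinte} (ratio of $e^{V(\beta_N)}$-type sums), and the second piece is controlled by \eqref{InegEsperanceZeitouni}: the double sum over $[\beta_N,2\gamma]$ of $(1+e^{X_\ell})e^{V(k)-V(\ell)}$ is bounded, using $\mathcal G_N^{(2)}$ for the length $\lesssim(\log N)^{(1+\varepsilon)/H}$ and $\mathcal G_N^{(3)}$ for the increments, by a polynomial in $\log N$ times $e^{c\sqrt{\log\log N}}$, hence $\leq N^{-1+o(1)}$, which is $o(1)$. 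Thus $P_\omega^\gamma[\tau(-1)\leq N]=o(1)$, say $\leq 1/2$ for $N$ large, so $P_\omega[\tau(-1)>N]\geq P_\omega[\tau(\gamma)<\tau(-1)]\cdot\tfrac12\geq e^{-c'\sqrt{\log\log N}}$, which is the desired bound after absorbing constants.

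The main obstacle is the bookkeeping in the second term: one must make sure the right interval $[\beta_N,\gamma]$ (and a little beyond, up to some $2\gamma$ or $T(2\log N)$ on the right) is used so that both the escape probability via \eqref{eqProbaAtteinte} and the expected exit time via \eqref{InegEsperanceZeitouni} are simultaneously controlled, and that every appearance of $e^{V(k)-V(\ell)}$ is bounded using only the good-event information — the uniform increment bound $\mathcal G_N^{(3)}$ on $|k|\leq(\log N)^{(1+\varepsilon)/H}$, the length bound $\mathcal G_N^{(2)}$, and the potential levels at $\beta_N$ and $\gamma$. The factor $e^{-c\sqrt{\log\log N}}$ arises precisely and only from $\mathcal G_N^{(3)}$ (a single increment can be as large as $\sqrt{c\log\log N}$, so $e^{V(-1)}$ and similar boundary terms cost $e^{c\sqrt{\log\log N}}$), and one should check no step secretly costs more (e.g. the $(\log\log N)^{-2}$ from $\mathcal G_N^{(4)}$ is polynomially small in $\log\log N$, hence harmless and absorbed into $e^{-c\sqrt{\log\log N}}$). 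Everything else is routine once the interval and the level bounds are pinned down.
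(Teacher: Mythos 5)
Your overall strategy (integrate over $\mathcal G_N$ and give a quenched lower bound) matches the paper, but the quenched estimate itself has a genuine gap: you aim the walk at $\gamma=T(1)$, whereas the whole construction of $\mathcal G_N$ is designed so that the walk should aim at $\beta_N=T(-2\log N)$, the \emph{bottom} of the valley. Concretely, you claim
\[
P_\omega[\tau(\gamma)<\tau(-1)]
=e^{V(-1)}\Big(\sum_{k=-1}^{\gamma-1}e^{V(k)}\Big)^{-1}
\geq e^{V(-1)}f(N),
\]
but $\mathcal G_N^{(4)}$ controls only $\sum_{k=0}^{\beta_N-1}e^{V(k)}\le \kappa(\log\log N)^2$, not the sum up to $\gamma-1$. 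On $\mathcal G_N$ the potential between $\beta_N$ and $\gamma$ is constrained only to lie below $1$, so $\sum_{k=\beta_N}^{\gamma-1}e^{V(k)}$ can be of order $\gamma\lesssim (\log N)^{(1+\varepsilon)/H}$; the correct lower bound you can extract for $P_\omega[\tau(\gamma)<\tau(-1)]$ is therefore only of order $(\log N)^{-(1+\varepsilon)/H}$, far smaller than the required $e^{-c\sqrt{\log\log N}}$. Intuitively this is exactly what should happen: getting from the valley bottom at $\beta_N$ (where $V\le -2\log N$) up to $\gamma$ (where $V\ge 1$) costs a factor $e^{-2\log N}=N^{-2}$, so conditioning on reaching $\gamma$ throws away essentially all of the probability you are trying to capture. (There is also a small algebraic slip: with $p=-1,q=0,r=\gamma$ in \eqref{eqProbaAtteinte} the numerator is $\sum_{k=-1}^{q-1}e^{V(k)}=e^{V(-1)}$, not $\sum_{k=-1}^{0}e^{V(k)}$, but this is minor compared to the issue above.)

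The fix, which is what the paper does, is to stop at $\beta_N$ rather than $\gamma$. By \eqref{eqProbaAtteinte},
\[
P_\omega[\tau(\beta_N)<\tau(-1)]=\Big(1+\sum_{k=0}^{\beta_N-1}e^{V(k)-V(-1)}\Big)^{-1},
\]
and on $\mathcal G_N$ this is $\geq \tfrac12 e^{-|V(-1)|}f(N)$, which \emph{is} $\geq e^{-c\sqrt{\log\log N}}$ using $\mathcal G_N^{(3)}$ and $\mathcal G_N^{(4)}$. Once at $\beta_N$, you do not need to reach $\gamma$; you only need the walk to \emph{stay} in $(-1,\gamma)$ for time $N$. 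Under $P_\omega^{\beta_N}$, the number of returns to $\beta_N$ before the first exit from $(-1,\gamma)$ is geometric with parameter
$p=P_\omega^{\beta_N}[\tau(-1)\wedge\tau(\gamma)<\tau(\beta_N)]$, and a direct computation via \eqref{eqProbaAtteinte} shows $p\lesssim (\log N)N^{-2}$ on $\mathcal G_N$ (each exit direction requires climbing $\ge 2\log N$ in potential). Since each excursion takes at least one time step, $\tau(-1)\wedge\tau(\gamma)\ge$ the number of excursions, so $P_\omega^{\beta_N}[\tau(-1)\wedge\tau(\gamma)>N]\ge (1-p)^{N+1}\ge \tfrac12$. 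Combining with the strong Markov property at $\tau(\beta_N)$ gives the quenched bound and hence the lemma. The crucial idea you are missing is that the walk is trapped at the valley bottom $\beta_N$ and never needs to visit $\gamma$; $\gamma$ merely marks the right edge of the trap.
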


\begin{proof}
Since $\beta_N>0$, for every $\omega\in{\mathcal G_N}$, we have by \eqref{eqProbaAtteinte},
\begin{equation}\label{EEEEEE1}
    P_\omega\left[\tau(\beta_N)<\tau(-1)\right]
=
    \bigg(1+\sum_{k=0}^{\beta_N-1}e^{V(k)-V(-1)}\bigg)^{-1}.
\end{equation}
Moreover, under $P_\omega^{\beta_N}$, the number of excursions of $(S_k)_{k\ge 0}$ from $\beta_N$ to $\beta_N$
without visiting neither $-1$ nor $\gamma$ is geometric with parameter $p$ given by
$$
    p
:=
    P_\omega^{\beta_N}\left[\tau(-1)\wedge\tau(\gamma)<\tau(\beta_N)\right].
$$
Using the fact that $-1<\beta_N<\gamma$, we observe that once more by \eqref{eqProbaAtteinte},
\begin{eqnarray*}
    p
& = &
    \frac 1{1+e^{X_{\beta_N}}}\left(P_\omega^{\beta_N+1}\left[\tau(\gamma)<\tau(\beta_N)\right]+
          e^{X_{\beta_N}}P_\omega^{\beta_N-1}\left[\tau(-1)<\tau(\beta_N)\right]\right)
\\
& = &
    \frac 1{1+e^{X_{\beta_N}}}\left(  \frac{e^{V(\beta_N)}}{\sum_{k=\beta_N}^{\gamma-1}e^{V(k)}}+
          e^{X_{\beta_N}}\frac{e^{V(\beta_N-1)}}{\sum_{k=-1}^{\beta_N-1}e^{V(k)}}\right)
\\
&\le&
    \frac 1{1+e^{X_{\beta_N}}} e^{V(\beta_N)}\left(  \frac{1}{e^{V(\gamma-1)}}+
         \frac{1}{e^{V(-1)}}\right)
\\
&\le&
    \frac 1{1+e^{X_{\beta_N}}} e^{-2\log N}\left(  \frac{1}{e^{1-X_{\gamma}}}+
         \frac{1}{e^{-X_0}}\right)
\\
&\le&
    2e^{\sqrt{\frac{(4-2H)(1+\varepsilon)}{H}\log\log N}} e^{-2\log N}
\\
&\le&
    (\log N)N^{-2},
\end{eqnarray*}
for every $\omega\in{\mathcal G}_N$, for $N$ large enough.
Since $\tau(-1)\wedge\tau(\gamma)$ is larger than this number of excursions, we conclude that
there exists $N_0\ge 0$ such that for every $N\ge N_0$
and every $\omega\in{\mathcal G}_N$, the following estimate holds
\begin{equation}\label{EEEEEE2}
    P_\omega^{\beta_N}\big[\tau(-1)\wedge\tau(\gamma)> N\big]
\ge
    (1-p)^{N+1}
\ge
    \frac 1 2.
\end{equation}
Hence, due to \eqref{EEEEEE1} and \eqref{EEEEEE2}, there exists $N_0\ge 3$ such that for every $N\ge N_0$
and every $\omega\in{\mathcal G}_N$, we have by the strong Markov property applied at time $\tau(\beta_N)$,
$$
    P_\omega\left[\tau(-1)>N\right]
\ge
    P_\omega\left[ \tau(\beta_N)<\tau(-1)\right]
    P_\omega^{\beta_N}\left[ \tau(-1)\wedge\tau(\gamma)>N\right]
\ge
    \frac {1}2 \bigg(1+\sum_{k=0}^{\beta_N-1}e^{V(k)-V(-1)}\bigg)^{-1}.
$$
Hence, for every integer $N\ge N_0$,
\begin{eqnarray}
    \mathbb P[\tau(-1)>N]&\ge&\int_{{\mathcal G}_N}P_\omega\left[\tau(-1)>N\right]\, dP(\omega)
\nonumber\\
&\ge&
    \frac {1}2\int_{{\mathcal G}_N}\Big(1+\sum_{k=0}^{\beta_N-1}e^{V(k)-V(-1)}\Big)^{-1}\, dP(\omega).
\label{EqProbaAnnealedRetourMoins1}
\end{eqnarray}
Note that on the set $\mathcal G_N$, $|V(-1)|\leq \sqrt{\frac{(4-2H)(1+\varepsilon)}{H}\log\log N}$,
so
$
    1+\sum_{k=0}^{\beta_N-1}e^{V(k)-V(-1)}
\leq
    \big(1+e^{|V(-1)|}\big)
    \sum_{k=0}^{\beta_N-1}e^{V(k)}
\leq
    2 e^{\sqrt{\frac{(4-2H)(1+\varepsilon)}{H}\log\log N}}
    \sum_{k=0}^{\beta_N-1}e^{V(k)}
$.
Hence, thanks to \eqref{EqProbaAnnealedRetourMoins1}, there exists $c>0$ such that for large $N$,
\begin{eqnarray*}
    \mathbb P[\tau(-1)>N]
& \geq &
    \frac{1}{4} e^{-\sqrt{\frac{(4-2H)(1+\varepsilon)}{H}\log\log N}}\int_{{\mathcal G}_N}\Big(\sum_{k=0}^{\beta_N-1}e^{V(k)}\Big)^{-1}\, dP(\omega)
\nonumber\\
& \geq &
    \frac{1}{4} e^{-\sqrt{\frac{(4-2H)(1+\varepsilon)}{H}\log\log N}}f(N)P\big(\mathcal{G}_N\big)
    \nonumber\\
& \geq &
    e^{-c \sqrt{\log\log N}} P\big(\mathcal{G}_N\big),
\end{eqnarray*}
as stated.
\end{proof}

\subsection{Probability of good environments}
This subsection is devoted to the proof of the following lemma:

\begin{lem}\label{LemmaProbaGN}
There exists $\tilde L_0$ a slowly varying function at infinity such that for large $N$,
$$
     P\big(\mathcal{G}_N\big)
\geq
    (\log N)^{-\left(\frac{1-H}{H}\right)} \tilde L_0(\log N).
$$
Moreover  if $V=B_H$, there exists $c>0$ such that
$
     P\big(\mathcal{G}_N\big)
\geq
    (\log N)^{-\left(\frac{1-H}{H}\right)} (\log\log N)^{-c}
$
for large $N$.
\end{lem}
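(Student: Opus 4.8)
The plan is to bound $\mathcal G_N$ from below by an \emph{explicit} sub-event built out of events that are decreasing in the Gaussian sequence $(X_i)_i$, so that its probability factorizes via the FKG inequality (available since the $X_i$ are positively associated, exactly as already used for \eqref{persiV2}); the dominant factor will be a persistence probability of order $(\log N)^{-(1-H)/H}$. Fix $q>0$, set $M_N:=(\log N)^{\frac{1+\varepsilon}{H}}$ (the cutoff appearing in $\mathcal G_N^{(2)}$ and $\mathcal G_N^{(3)}$), $h_N:=\lfloor(\log\log N)^2\rfloor$, $\theta_N:=\frac{1+\varepsilon}{H}\log\log N$, and let $d_N:=\sup\{k:\sigma_k\le 3\log N\}$. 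By \eqref{formulaVarVn}, $d_N=(\log N)^{1/H}\cdot(\text{slowly varying})$, hence $h_N\ll d_N\ll M_N$ for $N$ large; moreover $e^{-\theta_N}=(\log N)^{-(1+\varepsilon)/H}$, so $d_N e^{-\theta_N}=(\log N)^{-\varepsilon/H}\cdot(\text{slowly varying})\to 0$.

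First I would check that, for $N$ large, the event
\[
\mathcal H_N:=\underbrace{\Big\{\max_{0\le k<h_N}V(k)\le 0\Big\}}_{E_1}\cap\underbrace{\big\{V(h_N)\le-2\theta_N\big\}}_{F_1}\cap\underbrace{\Big\{\max_{h_N\le k\le d_N}\big(V(k)-V(h_N)\big)\le\theta_N\Big\}}_{F_2}\cap\underbrace{\big\{V(d_N)\le-3\log N\big\}}_{E_3}\cap\mathcal G_N^{(3)}\cap\{\gamma<M_N\}
\]
is contained in $\mathcal G_N$. Indeed $E_1$ and $F_1\cap F_2$ force $V(k)\le 0$ on $[0,h_N)$ and $V(k)\le-\theta_N$ on $[h_N,d_N]$, so $\max_{k\le d_N}V(k)\le 0<1$ and, by $E_3$, $V(d_N)\le -3\log N<-2\log N$; hence $\beta_N=T(-2\log N)\le d_N<T(1)=\gamma$, which is $\mathcal G_N^{(1)}$. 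Splitting $\sum_{k=0}^{\beta_N-1}e^{V(k)}$ at $h_N$ and using these two bounds gives $\sum_{k=0}^{\beta_N-1}e^{V(k)}\le h_N\cdot e^{0}+d_N\cdot e^{-\theta_N}\le h_N+1\le\kappa(\log\log N)^2$ for $N$ large (recall $\kappa=5(2/H)^2\ge 20$), which is $\mathcal G_N^{(4)}$; finally $\mathcal G_N^{(3)}$ and $\mathcal G_N^{(2)}=\{\gamma<M_N\}$ are imposed directly. Thus $P(\mathcal G_N)\ge P(\mathcal H_N)$.

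Next I would estimate $P(\mathcal H_N)\ge P(E_1)P(F_1)P(F_2)P(E_3)-P\big((\mathcal G_N^{(3)})^c\big)-P(\gamma\ge M_N)$, the product bound coming from FKG since $E_1,F_1,F_2,E_3$ are all decreasing in $(X_i)_i$. Here $P(F_1)=P\big(V(h_N)\le-2\theta_N\big)$ and $P(E_3)=P\big(V(d_N)\le-3\log N\big)$ are bounded below by positive constants: $\sigma_{h_N}^2\ge h_N\cdot r(0)=h_N$ (all $r(j)\ge0$, $r(0)=1$) forces $2\theta_N/\sigma_{h_N}\le 4(1+\varepsilon)/H$, and $\sigma_{d_N}\sim 3\log N$. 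The factor $P(E_1)=P(\max_{k<h_N}V(k)\le 0)$ is a persistence probability over the poly-$(\log\log N)$ horizon $h_N$, bounded below by $c\,\sigma_{h_N}/(h_N\sqrt{\log h_N})$, a slowly varying function of $\log N$, by Theorem~1 in \cite{AGPP} (and, for $V=B_H$, of order $h_N^{-(1-H)}$ up to poly-$(\log\log N)$, by self-similarity). By stationarity of increments, $P(F_2)=P(\max_{1\le j\le d_N-h_N}V(j)\le\theta_N)\ge P(\max_{j\le d_N-h_N}V(j)\le 0)\ge c\,\sigma_{d_N}/(d_N\sqrt{\log d_N})=(\log N)^{-(1-H)/H}\cdot(\text{slowly varying})$ by Theorem~1 in \cite{AGPP} (resp. $\ge(\log N)^{-(1-H)/H}(\log\log N)^{-c}$ for $V=B_H$, by self-similarity and the estimates used for \eqref{InegProbaTildeTy}). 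Hence the product is $\ge(\log N)^{-(1-H)/H}\tilde L_0(\log N)$ for some slowly varying $\tilde L_0$ (resp. $\ge(\log N)^{-(1-H)/H}(\log\log N)^{-c}$). On the other hand, $P\big((\mathcal G_N^{(3)})^c\big)\le(2M_N+1)\exp\big(-(2-H)(1+\varepsilon)H^{-1}\log\log N\big)=O\big((\log N)^{-(1+\varepsilon)(1-H)/H}\big)$ by the Gaussian tail bound on each $X_k$ and a union bound, and $P(\gamma\ge M_N)=P(\max_{k<M_N}V(k)<1)=O(\sigma_{M_N}/M_N)=O\big((\log N)^{-(1+\varepsilon)(1-H)/H}\cdot(\text{sv})\big)$ by the persistence upper bound of Theorem~11 in \cite{AGPP}; both are $o\big((\log N)^{-(1-H)/H}\tilde L_0(\log N)\big)$ thanks to the extra $\varepsilon$. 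Subtracting proves the lemma, in both cases.

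The step I expect to be the main obstacle is the design of $\mathcal H_N$, specifically the two-stage descent that simultaneously guarantees $\mathcal G_N^{(1)}$ and keeps the weighted sum $\sum_{k<\beta_N}e^{V(k)}$ below $\kappa(\log\log N)^2$: one must drop below $-\theta_N=-\frac{1+\varepsilon}{H}\log\log N$ within $\approx(\log\log N)^2$ steps (so the early terms, each $\le1$, contribute $\le h_N$) and then stay below $-\theta_N$ until reaching $-3\log N$ (so the remaining $\le d_N$ terms contribute $\le d_N e^{-\theta_N}=o(1)$), while keeping every defining piece a decreasing event so that FKG is available — the matching of $\theta_N$ against $d_N\approx(\log N)^{1/H}$ is exactly what makes this possible. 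Everything else reduces to the persistence asymptotics of \cite{AGPP} and routine Gaussian tail and union-bound estimates.
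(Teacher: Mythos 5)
Your proof is correct and its overall skeleton matches the paper's: construct an explicit sub-event of $\mathcal{G}_N^{(1)}\cap\mathcal{G}_N^{(4)}$ built from a two-phase descent, factorize it via Slepian/positive association, identify the dominant persistence factor of order $(\log N)^{-(1-H)/H}$, and subtract $P[(\mathcal{G}_N^{(2)})^c]$ and $P[(\mathcal{G}_N^{(3)})^c]$, which are $O((\log N)^{-(1+\varepsilon)(1-H)/H})$ and hence negligible.

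There is, however, one genuine and worthwhile difference from the paper's route. The paper proves an intermediate Lemma~\ref{lem:lemma5} whose Step~1 writes
$P[\,T(-2\log N)<d<T(1),\,\dots\,]=P[\,T(1)>d,\dots\,]-P[\,T(-2\log N)\ge d,\,T(1)>d,\dots\,]$
and must then control the subtracted small-deviation probability
$P[\max_{k\le d}|V(k)|\le 2\log N]$
via Li and Shao's Theorem~4.4, which requires setting up the blocks $\xi_i:=V(iK)-V((i-1)K)$ and carefully estimating $\sum_j (E[\xi_2\xi_j])^2$ using the regular variation of $r$. You bypass this entirely: by including the decreasing event $E_3=\{V(d_N)\le -3\log N\}$ (which has probability bounded below by a constant because $\sigma_{d_N}\sim 3\log N$, a fact you correctly justify by $\sigma_{d_N+1}/\sigma_{d_N}\to 1$), the inequality $T(-2\log N)\le d_N<T(1)$ holds on $\mathcal{H}_N$ by construction, so there is no small-deviation term to subtract. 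This costs you only one extra constant factor $P(E_3)$ in the FKG product, and it removes the most technical ingredient of the paper's Lemma~\ref{lem:lemma5}. The other choices (using $h_N=(\log\log N)^2$, the level $-2\theta_N$ with $\theta_N=\frac{1+\varepsilon}{H}\log\log N$, and the persistence lower bound of \cite{AGPP} for each factor) are essentially equivalent to the paper's $(\log d)^2$ split and its use of the Khoshnevisan--Lewis maximal inequality plus a Gaussian lower bound in \eqref{eqn:rmod2}--\eqref{eqn:rmod4}; in particular both produce slowly varying corrections in $\log N$ (resp.\ $(\log\log N)^{-c}$ when $V=B_H$), so the final statement is recovered in both regimes. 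Minor nit: for the discrete maxima $P(E_1)$, $P(F_2)$ you should cite Theorem~11 of \cite{AGPP} (discrete time) rather than Theorem~1, as the paper does in \eqref{persiV4}.
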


The proof of Lemma \ref{LemmaProbaGN} relies on the following technical result.

\begin{lem} \label{lem:lemma5}
There exists $\tilde L_0$ a slowly varying function at infinity such that
$$
    P\Big[\mathcal{G}_N^{(1)}\cap \mathcal{G}_N^{(4)}\Big]
\geq
    (\log N)^{-\frac{1-H}{H}} \tilde L_0(\log N).
$$
If moreover $V=B_H$, then there is a $c>0$ such that for large enough $N$
$$
    P\Big[\mathcal{G}_N^{(1)}\cap \mathcal{G}_N^{(4)}\Big]
\geq
    (\log N)^{-\frac{1-H}{H}} (\log\log N)^{-c}.
$$
\end{lem}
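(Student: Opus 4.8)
The plan is to estimate $P[\mathcal{G}_N^{(1)}\cap \mathcal{G}_N^{(4)}]$ by relating the event $\{\beta_N=T(-2\log N)<\gamma=T(1)\}$ together with the size of $\sum_{k=0}^{\beta_N-1}e^{V(k)}$ to a first passage problem for the potential $V$, and then to invoke the persistence estimates for $V$ coming from \cite{AGPP} (Theorem 11 and Theorem 1 therein), exactly in the spirit of Theorem \ref{frank} and Lemma \ref{PERSIbis}. The event $\mathcal{G}_N^{(1)}$ says that $V$ reaches $-2\log N$ before reaching $+1$; on this event, up to time $\beta_N$ the potential stays below $1$, so $\sum_{k=0}^{\beta_N-1}e^{V(k)}\le \beta_N e$, which already forces $\mathcal{G}_N^{(4)}$ to hold provided $\beta_N$ is not too large (since $f(N)=1/(\kappa(\log\log N)^2)$ decays only polylogarithmically). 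So the first step is to quantify $\beta_N$: on $\mathcal{G}_N^{(1)}$, with large probability $\beta_N$ is of order $(\log N)^{1/H}$ up to slowly varying corrections — heuristically because $\sigma_k\approx (\log N)$ when $k\approx(\log N)^{1/H}$ and $V$ is then of order $\log N$. Thus I would decompose
$$
P\big[\mathcal{G}_N^{(1)}\cap \mathcal{G}_N^{(4)}\big]
\ge
P\big[\mathcal{G}_N^{(1)},\ \beta_N\le m_N\big],
$$
where $m_N:=(\log N)^{(1+\varepsilon')/H}$ is a threshold chosen so that on $\{\beta_N\le m_N\}\cap\mathcal{G}_N^{(1)}$ one has $\sum_{k=0}^{\beta_N-1}e^{V(k)}\le e\, m_N$, which is $\le 1/f(N)$ for large $N$; hence $\mathcal{G}_N^{(4)}$ is automatic there.

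The second step is to lower bound $P[\mathcal{G}_N^{(1)},\ \beta_N\le m_N]$. I would write this as $P[\mathcal{G}_N^{(1)}]-P[\mathcal{G}_N^{(1)},\ \beta_N>m_N]\ge P[T(-2\log N)<T(1)]-P[T(1)\wedge T(-2\log N)>m_N]$. The first probability is, by the rescaling/monotonicity arguments already used in the proof of Theorem \ref{frank} (lower bound, around \eqref{InegProbaTildeTy}), bounded below by
$$
P\big[T(-2\log N)<T(1)\big]
\ \ge\ P\big[T(1)>d_N\big]-P\big[T(1)>d_N,\ T(-2\log N)>d_N\big]
$$
with $d_N=\lfloor(\log N)^{1/H}(\log\log N)^{q}\rfloor$; the term $P[T(1)>d_N]=P[\max_{k\le d_N}V(k)<1]\asymp (\sigma_{d_N}/d_N)$ up to $(\log\log N)$-factors by Theorem 11 (resp.\ Theorem 1) of \cite{AGPP} and \eqref{formulaVarVn}, which gives precisely $(\log N)^{-(1-H)/H}$ times a slowly varying correction (resp.\ times $(\log\log N)^{-c}$ when $V=B_H$); and the subtracted term is negligible by the small/large deviation estimates for $V$ as in the proof of Theorem \ref{frank}. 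The ``remainder'' $P[T(1)\wedge T(-2\log N)>m_N]$ is handled the same way: it is $\le P[\max_{k\le m_N}|V(k)|\le 2\log N]$, a small deviation event for $V$ (since $2\log N = o(\sigma_{m_N})=o((\log N)^{(1+\varepsilon')})$), hence bounded by $\exp(-c(\log N)^{\delta})$ for some $\delta>0$, which is negligible against $(\log N)^{-(1-H)/H}$ times polylog. Collecting these bounds yields the claimed lower bound, with $\tilde L_0$ the slowly varying function produced by \eqref{formulaVarVn} and by the $\limsup/\liminf$ constants in Theorem 11 of \cite{AGPP}.

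The main obstacle is the control of $\beta_N$, i.e.\ ruling out that $V$ dwells a very long time below $1$ without ever reaching $-2\log N$ — equivalently, showing $P[T(1)\wedge T(-2\log N)>m_N]$ is genuinely negligible. For $H=1/2$ this is classical, but for $H\in(1/2,1)$ the long-range dependence means $V$ can stay positive-but-bounded for atypically long stretches, so one must use the FBM small-deviation bound $P[\sup_{s\le 1}|B_H(s)|\le \rho]\le \exp(-c\rho^{-1/H})$ (Li–Shao \cite{lishao}) after the self-similar rescaling $s\mapsto s\, m_N$, together with a discrete-to-continuous comparison exactly as in \eqref{InegProbaMaxBH1d}–\eqref{ProbaTyT-x1} of Theorem \ref{frank}; in the non-FBM case one must instead invoke the invariance principle (Taqqu \cite{taqqu}, Lemma 5.1) or directly Theorem 11 of \cite{AGPP} applied to $V$ to get a polylog upper bound, which is why the general statement only produces a slowly varying $\tilde L_0$ rather than an explicit $(\log\log N)^{-c}$. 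A secondary technical point is that $\mathcal{G}_N^{(1)}$ requires $\beta_N<\gamma$ strictly and that $V(\beta_N)$ overshoots $-2\log N$ by only $O(\sqrt{\log\log N})$ on $\mathcal{G}_N^{(3)}$; but since Lemma \ref{lem:lemma5} only concerns $\mathcal{G}_N^{(1)}\cap\mathcal{G}_N^{(4)}$ and not $\mathcal{G}_N^{(3)}$, the overshoot plays no role here and can be deferred to the proof of Lemma \ref{LemmaProbaGN}.
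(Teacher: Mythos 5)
There is a genuine gap in your argument, and it is located exactly at the step you describe as making $\mathcal{G}_N^{(4)}$ ``automatic.'' You claim that on $\mathcal{G}_N^{(1)}\cap\{\beta_N\le m_N\}$ with $m_N=(\log N)^{(1+\varepsilon')/H}$, the bound $\sum_{k=0}^{\beta_N-1}e^{V(k)}\le e\,m_N$ implies $\mathcal{G}_N^{(4)}$, i.e.\ $\sum_{k=0}^{\beta_N-1}e^{V(k)}\le 1/f(N)=\kappa(\log\log N)^2$. But $e\,m_N=e\,(\log N)^{(1+\varepsilon')/H}$ is \emph{enormously} larger than $\kappa(\log\log N)^2$ for large $N$, so the implication is false. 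If instead you shrink $m_N$ to order $(\log\log N)^2$ to make the implication hold, then the event $\{\beta_N\le m_N\}$ becomes a large-deviation event for $V$ (you would need $V$ to drop by $2\log N$ in $O((\log\log N)^2)$ steps, while $\sigma_{(\log\log N)^2}\asymp(\log\log N)^{2H}$), and its probability decays much faster than the target $(\log N)^{-(1-H)/H}$. So either way the decomposition cannot produce the correct order.

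The missing idea is that $\mathcal{G}_N^{(4)}$ requires far more than $V$ staying below $1$ up to $\beta_N$: you need the sum $\sum_{k=0}^{\beta_N-1}e^{V(k)}$ to be only polylogarithmic in $N$ even though $\beta_N$ itself is of order $(\log N)^{1/H}$. That can only happen if $V(k)$ drops to a deeply negative level (around $-\log\beta_N$) after a short initial stretch, so that the bulk of the terms $e^{V(k)}$ are $O(1/\beta_N)$ and the whole sum is dominated by the first $O((\log\log N)^2)$ terms. The paper realizes this by inserting a deterministic time $d\asymp(\log N)^{1/H}$ (up to slowly varying corrections) and lower-bounding by the probability of the event that $V(k)<1$ for $k\le\lfloor\log d\rfloor^2$ and $V(k)\le-\log d$ for $\lfloor\log d\rfloor^2<k\le d$; on that event $\sum_{k=0}^d e^{V(k)}\le 1+(\log d)^2 e+d\,e^{-\log d}\le 5(\log d)^2\le\kappa(\log\log N)^2$, so $\mathcal{G}_N^{(4)}$ does hold, while $T(1)>d$ holds as well, and the ``escape'' term $P[T(-2\log N)\ge d,\ T(1)>d,\ \cdots]$ is shown negligible via a bespoke small-deviation estimate (Li--Shao Thm.~4.4 applied to a block decomposition of $V$). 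You should replace your ``staying below $1$'' bound with this two-level barrier (below $1$ briefly, then below $-\log d$), and correspondingly replace your small-deviation remainder $P[T(1)\wedge T(-2\log N)>m_N]$ by a bound for the event that the modulus of $V$ stays $\le 2\log N$ over $[0,d]$ despite $\sigma_d\gg\log N$. The rest of your toolbox — Slepian to factor the probability, Theorem~11 of \cite{AGPP} for the one-sided barrier, the remark that $\mathcal{G}_N^{(3)}$ is irrelevant here — is appropriate, but as written the central estimate is incorrect and the stated bound does not follow.
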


\smallskip

\begin{proof}[Proof of Lemma \ref{LemmaProbaGN}]
Note that, by Theorem~11 of \cite{AGPP}, large enough $N$
\begin{equation}\label{eqProbaGN2}
    P\Big[\big(\mathcal{G}_N^{(2)}\big)^c\Big]
\le
    (\log N)^{-\left(\frac{(1-H)(1+\varepsilon)}{H}\right)}\ell\left((\log N)^{\frac {1+\varepsilon}H}\right) 
\end{equation}
Moreover,  for $N$ large enough, we have since $X_k\sim\mathcal{N}(0,1)$ for every $k\in\ZZ$,
\begin{eqnarray}
    P\Big[\big(\mathcal{G}_N^{(3)}\big)^c\Big]
& \le &
    3(\log N)^{\frac {1+\varepsilon}H}P\left[|X_0|> \sqrt{\frac{(4-2H)(1+\varepsilon)}{H}\log\log N} \right]
\nonumber\\
& \le  &
    3(\log N)^{\frac {1+\varepsilon}H} e^{-\frac{(2-H)(1+\varepsilon)}{H}\log\log N}
\nonumber\\
& = &
   3(\log N)^{-\frac {(1-H)(1+\varepsilon)}H}.
\label{eqProbaGN3}
\end{eqnarray}
Due to Lemma \ref{lem:lemma5}, for large $N$,
$$
     P\big(\mathcal{G}_N\big)
\geq
    P\big[\mathcal{G}_N^{(1)}\cap {\mathcal{G}}_N^{(4)}\big]
    -P\big[\big(\mathcal{G}_N^{(2)}\big)^c\big]
    -P\big[\big(\mathcal{G}_N^{(3)}\big)^c\big]
\geq
    (\log N)^{-\left(\frac{1-H}{H}\right)} \tilde L_0(\log N),
$$
since the probability of the sets
$\big(\mathcal{G}_N^{(2)}\big)^c$ and $\big(\mathcal{G}_N^{(3)}\big)^c$
are of lower order by \eqref{eqProbaGN2} and \eqref{eqProbaGN3}.
Similarly,
$
     P\big(\mathcal{G}_N\big)
\geq
    (\log N)^{-\frac{1-H}{H}} (\log\log N)^{-c}
$
for large $N$ if $V=B_H$.
\end{proof}

\smallskip

\begin{proof}[Proof of Lemma \ref{lem:lemma5}]
{\it Step 1:}
Let $K:=K_N:=\min\{k\in\NN\, :\, \sigma^2_k\ge 33 (2\log N)^2\}$
and $L:=L_N:=\lfloor (\log\log N)^{\frac q{2H}}\rfloor$, with $q>2H/(4-4H)$ and $q>2H$.
Moreover for every $\varepsilon>0$,
$
    \sigma_{K}^2
\le
    2(\sigma_{K-1}^2+\sigma_1^2)
\leq
    (33\times 2^3+\varepsilon)(\log N)^2
$ for large $N$.
Due to Karamata's characterization of regularly varying
functions (see Karamata \cite[Thm III]{Karamata} or Bingham, Goldie and Teugels \cite[Thm 1.3.1]{BGT}), for a fixed  $u>0$,
for $N$ large enough, we have
\begin{equation}\label{ratio}
  \forall j\ge 1,\quad     j^{2H-u} \le \frac{\sigma^2_{jK}}{\sigma^2_{K}}=j^{2H}\frac{\ell(jK)}{\ell(K)}
          \le  j^{2H+u}
\end{equation}
(we can take $u=0$ if $\ell=1$).
Consequently for such $u$,
$8(\log N)(\log\log N)^{\frac{q(H-u)}{2H}}\le \sigma_{LK}\le 17 (\log N)(\log\log N)^{\frac{q(H+u)}{2H}}$
for large $N$.
Set $d:=LK$.
Then,
\begin{eqnarray}
    P\big[ \mathcal{G}_N^{(1)} \cap \mathcal{G}_N^{(4)}\big]
& =&
    P\bigg[ T(-2\log N) < T(1), \frac{1}{\sum_{k=0}^{T(-2\log N)-1} e^{V(k)}} \geq f(N) \bigg]
\notag \\
& \geq &
    P\bigg[ T(-2\log N) < d < T(1), \frac{1}{\sum_{k=0}^{d} e^{V(k) }} \geq f(N) \bigg]
\notag \\
&=&
    P\bigg[ T(1)>d, \frac{1}{\sum_{k=0}^{d} e^{V(k)}} \geq f(N) \bigg]
\notag \\
&& \qquad\qquad
    - P\bigg[ T(-2\log N) \geq d,  T(1)>d, \frac{1}{\sum_{k=0}^{d} e^{V(k)}} \geq f(N) \bigg].
    \label{eqn:firstst}
\end{eqnarray}
For the last term in (\ref{eqn:firstst}), we will apply Li and Shao \cite[Thm. 4.4]{lishao} with  $\xi_i:=V(iK)-V((i-1)K)$,
$X(t)=V(dt)$ for $t$ multiple of $1/d$, $a= 1/L$ and with $\varepsilon= 2 \log N$, and note that $X(ia)=V(id/L)=V(iK)$.
Observe that $E[\xi_i^2]=E[(V(K))^2]=\sigma^2_{K}$.
Hence $L^{-1}\sum_{i=2}^{L}{E[\xi_i^2]}\ge 32(2\log N)^2$ and, due to \cite[Thm. 4.4]{lishao}, we obtain
\begin{multline*}
    P\bigg[ T(-2\log N) \geq d, T(1)>d, \frac{1}{\sum_{k=0}^{d} e^{V(k)}} \geq f(N) \bigg]
\leq
    P\Big[ \max_{k=1,\ldots,d} |V(k)| \leq 2\log N\Big]\\
\leq \exp\left(-\frac{(2\log N)^4}{16 L^{-2} \sum_{i,j=2}^{L}(E[\xi_i\xi_j])^2}\right)
\leq \exp\left(-\frac{(2\log N)^4}{32 L^{-1} \sum_{j=2}^{L}(E[\xi_2\xi_j])^2}\right),
\end{multline*}
where we used
$
    \sum_{j=2}^L  (E[\xi_i\xi_j])^2
\leq
    \sum_{k=-L+4}^L  (E[\xi_2\xi_k])^2
\leq
    2\sum_{j=2}^L  (E[\xi_2\xi_j])^2
$
by stationarity
for every $i\in\{2,\dots,L\}$
 in the last inequality,
Moreover, note that by Cauchy–-Schwarz inequality,
$
    \sum_{j=2}^5(E[\xi_2\xi_j])^2
\leq
    \sum_{j=2}^5 (E[\xi_2^2])(E[\xi_j^2])
=
    4 \sigma^4_K
=
    O((\log N)^4)
$
and that, for every $j\ge 6$,
$$
E[\xi_2\xi_j]=[\sigma_{ (j-1)K}^2-2\sigma^2_{ (j-2)K}+\sigma_{ (j-3)K}^2]/2.
$$
Recall that
$$
    \sigma_{jK}^2=\sum_{n,m=1}^{jK}r(n-m)=jK+2\sum_{m=1}^{jK}(jK-m)r(m).
$$
Hence only the $r(m)$'s with $m> (j-3)K$ contribute to $E[\xi_2\xi_j]$ and, for every $u>0$,
there exists $C_0>0$, such that
$$
\forall j\ge 6,\quad |E[\xi_2\xi_j]|\le  (2K)^2 \sup_{m\,:\,|m- (j-2)K|\le K}r(m)
\le C_0 j^{2H-2+u}\sigma^2_K,
$$
if $N$ is large enough, since $m^2r(m)=O(\sigma^2_m)$
(see \cite[Prop 1.5.8, Prop 1.5.9a]{BGT} as in \eqref{formulaVarVn}) and due to \eqref{ratio}.
Since $\sigma_K^2=O((\log N)^2)$, it comes that, if $N$ is large enough
\begin{eqnarray}
    P\bigg[ T(-2\log N) \geq d, T(1)>d, \frac{1}{\sum_{k=0}^{d} e^{V(k)}} \geq f(N) \bigg]
& \leq &
    \exp\left(-c \min(L,L^{4-4H-u})\right)
\nonumber\\
& \leq &
    (\log N)^{-\frac{1-H}{H}-1}
\label{InegfofN1}
\end{eqnarray}
for some $c>0$, where we take $u>0$ such that $(4-4H-u)q/(2H)>1$ and since $q/(2H)>1$.

For the first term in (\ref{eqn:firstst}), observe that
\begin{eqnarray}
&&
    P\bigg[ T(1)>d, \frac{1}{\sum_{k=0}^{d} e^{V(k)}} \geq f(N) \bigg]
\nonumber\\
& \geq &
    P\Big[V(k)< 1, k=1,\ldots,\lfloor\log d\rfloor^2; V(k) \leq - \log d, k=\lfloor\log d\rfloor^2+1,\ldots, d\Big],
\label{InegfofN2a}
\end{eqnarray}
because if $V$ satisfies the conditions inside the previous probability
(because $\kappa=5(2/H)^2$ in the definition of $f(N)$, since for large $N$,
$d=(\log N)^{\frac 1H}\tilde L(\log N)\leq (\log N)^{\frac 2H}$
for some $\tilde L$ slowly varying at infinity):
$$
    \sum_{k=0}^{d} e^{V(k)}
=
    1 + \sum_{k=1}^d e^{V(k)}
\leq
    1+(\log d)^2 \, e^1 + d e^{-\log d}
\leq
    5(\log d)^2
\leq
    \kappa(\log\log N)^2
=
    f(N)^{-1}.
$$

\noindent
{\it Step 2:}
In order to show the lemma, in view of  \eqref{eqn:firstst},  \eqref{InegfofN1}  and \eqref{InegfofN2a},
it remains to study
\[
    P\big[V(k)< 1, k=1,\ldots,\lfloor\log d\rfloor^2; V(k)\leq - \log d, k=\lfloor\log d\rfloor^2+1,\ldots, d\big].
\label{eqn:secondstar}
\]
For this purpose, first observe that by Slepian's lemma,
\begin{eqnarray}
& & P\big[V(k)< 1, k=1,\ldots,\lfloor\log d\rfloor^2; V(k)\leq - \log d, k=\lfloor\log d\rfloor^2+1,\ldots, d\big]
\notag\\
& \geq &
    P\big[V(k)< 1, k=1,\ldots,\lfloor\log d\rfloor^2\big] \cdot P\big[V(k)\leq - \log d, k=\lfloor\log d\rfloor^2+1,\ldots, d\big].
    \label{eqn:rerslep}
\end{eqnarray}
Let us look at the first term in the right hand side of \eqref{eqn:rerslep}.
Applying the maximal inequality in Proposition~2.2 in Khoshnevisan and Lewis \cite{KL}
as in the start of the proof of our Lemma \ref{PERSIbis}, we  can write
\begin{eqnarray}
    P\Big[\max_{k=1,\ldots,\lfloor\log d\rfloor^2} V(k) < 1\Big]
&=&
    1-P\Big[\max_{k=1,\ldots,\lfloor\log d\rfloor^2} V(k) \geq 1\Big]
\nonumber\\
& \geq &
    1-2 P\Big[V \big( \lfloor\log d\rfloor^2 \big) \geq 1\Big]
\nonumber\\
& = & 
    P\Big[|V(1)| < \lfloor\log d\rfloor^{-2H}(\ell(\lfloor\log d\rfloor^2))^{-\frac12}\Big]\nonumber\\
&\geq&
    c \ \lfloor\log d\rfloor^{-2H}(\ell(\lfloor\log d\rfloor^2))^{-\frac12}, \label{eqn:rmod2}
\end{eqnarray}
since $V(1)\sim\mathcal N(0,1)$.
Let us now consider the second term in (\ref{eqn:rerslep}):
\begin{eqnarray}
&&
    P\big[V(k)\leq - \log d, k=\lfloor\log d\rfloor^2+1,\ldots, d\big]
\notag \\
&\geq &
    P\big[ V( \lfloor\log d\rfloor^2 )  \leq - \log d; V(k)-V(\lfloor\log d\rfloor^2)\leq 0, k=\lfloor\log d\rfloor^2+1,\ldots, d\big]
\notag \\
&\geq &
    P\big[ V( \lfloor\log d\rfloor^2 ) \leq - \log d\big]
    \cdot
    P\big[V(k)-V(\lfloor\log d\rfloor^2)\leq 0, k=\lfloor\log d\rfloor^2+1,\ldots, d\big],
\label{eqn:rmod3}
\end{eqnarray}
where the last step follows from Slepian's lemma and we use that $r(i)\ge 0$ for all $i\in\ZZ$.
The first term in \eqref{eqn:rmod3} equals
\begin{equation}
    P\big[ V( \lfloor\log d\rfloor^2 ) \leq - \log d\big]
=
    P\left[  V(1) \leq - \frac{\log d}{\sigma_{\lfloor\log d\rfloor^2}}\right]
\geq
    P[ V(1) \leq - 2]
\geq
    \mbox{const.},
\label{eqn:rmod4}
\end{equation}
for $N$ large enough, since $\sigma_T^2\ge \sum_{i=1}^T\mathbb E[X_i^2]=T$.
The second term in (\ref{eqn:rmod3}) is bounded below by Theorem~11 in \cite{AGPP}:
\begin{eqnarray*}
&& P[V(k)-V(\lfloor\log d\rfloor^2)\leq 0, k=\lfloor\log d\rfloor^2+1,\ldots, d]
\\
&=& P[V(k)\leq 0, k=1,\ldots, d-\lfloor\log d\rfloor^2]
\\
&\geq & c\frac{d^{-(1-H)}\sqrt{\ell(d)}}{\sqrt{\log d}},
\end{eqnarray*}
for some $c>0$.
Putting this together with \eqref{InegfofN2a}, \eqref{eqn:rerslep}, \eqref{eqn:rmod2},
\eqref{eqn:rmod3}, and \eqref{eqn:rmod4},
we obtain
\[
P\bigg[ T(1)>d, \frac{1}{\sum_{k=0}^{d} e^{V(k)}} \geq f(N) \bigg]
\nonumber\\
 \geq c'\frac{d^{-(1-H)}\sqrt{\ell(d)}}{(\log d)^{2H+\frac 12}\sqrt{\ell((\log d)^2)}}
 \, ,
\]
for some $c'>0$ ;
which, combined with \eqref{eqn:firstst}, \eqref{InegfofN1} and with the
definition of $d$, gives the result.
\end{proof}

\smallskip
Finally, putting \eqref{InegLowerBoundPersistenceGN} from Lemma~\ref{lem:ppviagoodenv} and Lemma \ref{LemmaProbaGN} together proves the
lower bound in Theorem~\ref{persist}.

\section{Proof of Theorem \ref{main1}}\label{proofThm2}
We start by stating the following lemma, which will be helpful to analyze asymptotic quantities coming from the hitting time formula (\ref{eqProbaAtteinte}). However, we believe that this lemma may be of independent interest (cf.\ the continuous time analogs in \cite{Molchan1999,Aurzada}).


\begin{lem}\label{Mol} Let $Z=(Z_n)_{n\in\NN}$ be a stochastic process with
\begin{equation} \label{ass1}
    \lim_{T\to+\infty} \frac{1}{T^{H}\ell(T)} \EE\left[ \sup_{  t\in[0,1]} Z_{\lfloor t T\rfloor} \right] = \kappa,
\end{equation}
for some $H\in(0,1)$, $\kappa\in(0,\infty)$, and with $\ell$ being a slowly varying function at infinity. Further assume that $Z$ is time-reversible in the sense that for any $T\in\NN$, the vectors $(Z_{T-k}-Z_T)_{k=0,\ldots,T}$ and $(Z_k)_{k=0,\ldots,T}$ have the same law.
Then,
$$
    \limsup_{x\rightarrow +\infty} \frac{x^{1-H}}{ \ell(x)} \EE\bigg[ \Big(\displaystyle\sum_{l=0}^{\lfloor x\rfloor} e^{Z_l}\Big)^{-1} \bigg]
\leq
    \kappa H
$$
and
$$
    \liminf_{x\rightarrow +\infty} \frac{x^{1-H}}{\ell(x)} \EE\bigg[ \Big(\displaystyle\sum_{l=1}^{\lfloor x\rfloor} e^{Z_l}\Big)^{-1} \bigg]
\geq
    \kappa H.
$$
\end{lem}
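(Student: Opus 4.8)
The plan is to connect the Laplace-type functional $\EE[(\sum_{l} e^{Z_l})^{-1}]$ to the running maximum of $Z$ by exploiting time-reversibility and the elementary identity/inequality between $\sum_l e^{Z_l}$ and $\max_l e^{Z_l} = e^{\max_l Z_l}$. First I would observe that, for fixed $n$,
$$
    e^{\max_{0\le l\le n} Z_l}
\leq
    \sum_{l=0}^{n} e^{Z_l}
\leq
    (n+1)\, e^{\max_{0\le l\le n} Z_l},
$$
so that $(\sum_{l=0}^n e^{Z_l})^{-1}$ is sandwiched between $(n+1)^{-1} e^{-\max_{0\le l\le n}Z_l}$ and $e^{-\max_{0\le l\le n}Z_l}$. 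This reduces the problem to understanding $\EE[e^{-\max_{0\le l\le n} Z_l}]$, but that alone loses the sharp constant; the genuine mechanism is a \emph{time-reversal plus summation-by-position} trick, which I describe next.

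The key step is the following identity. Write $M_n := \max_{0\le l\le n} Z_l$ and, for $0\le j\le n$, consider the event that the maximum over $\{0,\dots,n\}$ is attained (say, first attained) at position $j$. Using time-reversibility of $(Z_k)_{k=0,\dots,n}$, the increment process read backwards from $j$ and forwards from $j$ both look like independent copies (after conditioning appropriately) of shorter paths of the same process, and one gets a ``cycle lemma''-type decomposition: the contribution of $\{l : Z_l = M_n\}$, weighted by $e^{Z_l - M_n}$, telescopes so that
$$
    \EE\Big[\frac{1}{\sum_{l=0}^n e^{Z_l}}\Big]
    \;=\;
    \EE\Big[\frac{e^{-M_n}\,\#\{l\le n : Z_l = M_n\}}{\text{(something)}}\Big]
$$
— more practically, I would follow the Molchan-type argument (cf.\ \cite{Molchan1999,Aurzada}): integrate the identity $\frac{1}{S} = \int_0^\infty e^{-uS}\,du$ with $S=\sum_{l=0}^n e^{Z_l}$ is too crude, so instead use the discrete analog of Molchan's observation that $\EE[(\sum e^{Z_l})^{-1}]$ equals, up to the reversal, the probability that $Z$ stays below a fresh independent exponential level, and then relate \emph{that} to $\EE[\sup_{t\in[0,1]} Z_{\lfloor tT\rfloor}]$ via integration by parts over the level, which is exactly where the factor $H$ (the ``derivative of $T^H$'') and the slowly varying $\ell$ enter. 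Assumption \eqref{ass1} then feeds in directly to produce the constant $\kappa H$ in the limit; the $1/T$ vs.\ $1/(T+1)$ and the $\lfloor\cdot\rfloor$ discrepancies are absorbed because $\ell$ is slowly varying and $T^H\ell(T)$ is regularly varying of positive index.

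For the upper bound (the $\limsup$), I would run this with the sum from $l=0$, using the easy inequality $\sum_{l=0}^{\lfloor x\rfloor} e^{Z_l} \ge e^{M_{\lfloor x\rfloor}}$ together with the reversal to replace $M_{\lfloor x\rfloor}$ by a ``last-maximum'' functional whose expectation, after the level integration-by-parts, is governed by \eqref{ass1}; the regular variation gives $\lim \frac{1}{T^H\ell(T)}\EE[M_{\lfloor T\rfloor}] = \kappa$ and then $\frac{d}{dT}T^H = HT^{H-1}$ yields $\kappa H$ after dividing by the appropriate power and recognizing a Cesàro/Karamata average. For the lower bound (the $\liminf$), I would start the sum at $l=1$ and bound $\sum_{l=1}^{\lfloor x\rfloor} e^{Z_l}$ from above by the reversed running maximum times a polynomial correction, the polynomial correction being killed by the $\sqrt{\log}$-type slack once we only ask for $\liminf \ge \kappa H$ rather than an exact limit; uniform integrability of $\frac{1}{T^H\ell(T)}\sup_{t\in[0,1]} Z_{\lfloor tT\rfloor}$, which follows from \eqref{ass1} by the standard ``convergence of expectations of nonnegative quantities forces tightness of the family'' argument, lets us pass the liminf through.

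The hard part will be making the time-reversal decomposition rigorous in discrete time: one must handle ties (the maximum attained at several indices), the boundary terms at $0$ and $n$, and the fact that the ``forward'' and ``backward'' pieces are not literally independent but only satisfy the reversibility hypothesis as stated. I expect to route around this exactly as in the continuous-time analogs \cite{Molchan1999,Aurzada}: express $(\sum_{l=0}^n e^{Z_l})^{-1}$ as an expectation over an auxiliary exponential random variable $\mathbf{e}$ independent of $Z$, namely use $\frac{1}{\sum_l e^{Z_l}} = \EE_{\mathbf e}[\mathbf 1_{\{\mathbf e > \sum_l e^{Z_l}\}}]$ only after a logarithmic change of variables so that the event becomes $\{ \max_l (Z_l + \log\text{(weights)}) < \log \mathbf e\}$, apply reversibility to this \emph{first-passage} event of the level $\log\mathbf e$, and finally integrate over $\mathbf e$; the resulting double integral is precisely $\int_0^\infty P[Z \text{ stays below level } v \text{ on } [0,n]]\,(\text{density in } v)\,dv$, which by a further integration by parts equals $\EE[\sup_{l\le n} Z_l]$ up to the explicit constant, and the regular variation of $n\mapsto n^H\ell(n)$ supplies the $H$. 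Once that identity is in hand, both the $\limsup$ and $\liminf$ statements follow by plugging in \eqref{ass1} and standard Karamata estimates, with the $l=0$ versus $l=1$ choice in the two halves chosen precisely to make the one-sided bounds go through without needing the sharp two-sided limit.
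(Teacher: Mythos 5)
You correctly identify the continuous-time analogues (Molchan, Aurzada) and two genuine ingredients of the argument: the sandwich $e^{M_n}\le \sum_{l=0}^n e^{Z_l}\le (n+1)e^{M_n}$ with $M_n:=\max_{0\le l\le n}Z_l$, and the role of time-reversibility. You also correctly anticipate that $H$ must come from a derivative of $t\mapsto t^H$ and that the $l=0$ versus $l=1$ discrepancy reflects which side of a one-sided bound one uses. But the mechanism you propose for passing from $\EE[M_n]$ to $\EE[(\sum_l e^{Z_l})^{-1}]$ does not close. After your logarithmic change of variables (here $Z_0=0$, so $S:=\sum_{l=0}^{n}e^{Z_l}\ge 1$), one has $\EE[1/S]=\PP[\mathbf{e}>\log S]=\EE[e^{-\log S}]$, and $\log S$ is within $\log(n+1)$ of $M_n$; so your reformulation amounts to computing $\EE[e^{-M_n}]$, a Laplace-transform functional of $M_n$. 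That quantity is \emph{not} determined by $\EE[M_n]$, and hypothesis \eqref{ass1} provides nothing beyond the first moment of the running maximum. So the proposed ``integration by parts over the level'' step cannot yield the constant $\kappa H$; there is a genuine gap, and you never write down the identity that bridges the two expectations.

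The step that actually closes this gap in the paper's proof — and which your sketch circles around but does not produce — is to differentiate the interpolated log-partition function $\Psi(T):=\EE\big[\log\big(\sum_{k=0}^{\lfloor T\rfloor-1}e^{Z_k}+(T-\lfloor T\rfloor)e^{Z_{\lfloor T\rfloor}}\big)\big]$. Your sandwich shows at once that $\Psi(T)\sim\kappa T^H\ell(T)$, since $\log(T+1)$ is negligible against $T^H\ell(T)$. Then one computes the a.e.\ derivative: for $x\in[k,k+1)$, $\Psi'(x)=\EE\big[e^{Z_k}/\big(\sum_{l=0}^{k-1}e^{Z_l}+(x-k)e^{Z_k}\big)\big]$; dividing numerator and denominator by $e^{Z_k}$ and applying time-reversibility in exactly the form stated in the lemma gives $\Psi'(x)=\EE\big[\big(\sum_{l=1}^{k}e^{Z_l}+(x-k)\big)^{-1}\big]$. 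Integrating this over $[\lfloor ax\rfloor,\lfloor bx\rfloor+1]$ and bounding the integrand by its value at the extreme index (monotonicity of the partial sums, which is where $l=0$ appears on one side and $l=1$ on the other) sandwiches $\EE\big[\big(\sum_{l=0}^{\lfloor bx\rfloor}e^{Z_l}\big)^{-1}\big]$ and $\EE\big[\big(\sum_{l=1}^{\lfloor ax\rfloor}e^{Z_l}\big)^{-1}\big]$ between expressions that converge to $\kappa(b^H-a^H)/(b-a)$; letting $b=1$, $a\uparrow 1$ gives $\kappa H$. So $H$ is literally the derivative of $t^H$ applied to $\Psi$, not to an integral over levels. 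Note also that no uniform integrability or tightness argument is needed for the $\liminf$ bound; the monotonicity estimate already does the work.
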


Note the difference in the summation $l=0,\ldots$ vs.\ $l=1,\ldots$, which complicates the use of this lemma.

In fact, it suffices to have the two terms in question bounded from above and below, respectively. For this purpose, one could replace  (\ref{ass1}) by the weaker assumption that
$$
\frac{1}{T^{H}\ell(T)} \EE\left[ \sup_{  t\in[0,1]} Z_{\lfloor t T\rfloor} \right]
$$
is bounded away from zero and infinity for large $T$.

\begin{proof}
Let us define for every $T\in [1,+\infty),$
$$ \Psi(T):= \EE\left[ \log\bigg(\sum_{k=0}^{\lfloor T\rfloor-1} e^{Z_k} + (T- \lfloor T\rfloor)e^{Z_{\lfloor T\rfloor} } \bigg)\right].$$
We clearly have
$$
    \EE\Big[ \sup_{t\in [0,1]} Z_{\lfloor t(\lfloor T\rfloor -1)\rfloor}\Big]
\leq \Psi(T) \leq \EE\Big[ \sup_{t\in [0,1]} Z_{\lfloor tT\rfloor}\Big]  +\log (T+ 1).$$
From assumption (\ref{ass1}), it follows that $\Psi(T)\sim \kappa T^H \ell(T) $ as $T\to+\infty$.

By Fubini's theorem we have for any $u\in (1,+\infty)$,
\begin{eqnarray*}
    \Psi(u)
&=&
    \EE\bigg[ \log \Big( \sum_{l=0}^{\lfloor u\rfloor -1} e^{Z_l} + (u-\lfloor u\rfloor) e^{Z_{\lfloor u\rfloor} } \Big) \bigg] \\
&=&
    \int_1^u \Psi'(x) \dd x,
\end{eqnarray*}
where for every $x\geq 1$,
\begin{eqnarray*}
    \Psi'(x)
& = &
    \sum_{k=1}^{\infty} \EE\left[ \frac{e^{Z_k}}{\sum_{l=0}^{k-1} e^{Z_l} +(x-k) e^{Z_k}} \right] {\bf 1}_{[k,k+1)}(x)
\\
& = &
    \sum_{k=1}^{\infty} \EE\left[ \frac{1}{\sum_{l=1}^{k} e^{Z_{k-l}-Z_k} +(x-k)} \right] {\bf 1}_{[k,k+1)}(x).
\end{eqnarray*}
Using time reversibility,
$$
    \Psi'(x) =\sum_{k=1}^{\infty} \EE\left[ \frac{1}{\sum_{l=1}^{k} e^{Z_l} +(x-k)} \right] {\bf 1}_{[k,k+1)}(x).
$$
Let $0< a<b <+\infty$. Then,  for $x$ large enough,
\begin{eqnarray}\label{diff}
    \Psi( \lfloor bx\rfloor+1) -\Psi( \lfloor ax\rfloor)
&=&
    \int_{\lfloor ax\rfloor}^{\lfloor bx\rfloor+1} \Psi'(u) \dd u\nonumber \\
&=&
    \sum_{k=\lfloor ax\rfloor}^{\lfloor bx\rfloor}  \int_k^{k+1} \EE\left[ \frac{1}{\sum_{l=1}^{k} e^{Z_l} +(u-k)} \right] \, \dd u.
\end{eqnarray}
\noindent{\bf Estimation of the limsup:} Estimating the last quantity from below, we get the inequality:
\begin{eqnarray*}
    \Psi \big( \lfloor bx\rfloor+1\big) -\Psi\big( \lfloor ax\rfloor\big)
&\geq &
    \big(\lfloor bx\rfloor  -\lfloor ax\rfloor +1\big) \EE\bigg[ \Big(\sum_{l=0}^{\lfloor bx\rfloor} e^{Z_l} \Big)^{-1}\bigg].
\end{eqnarray*}
Therefore,
$$
    \frac{x^{1-H}}{\ell(x)} \EE\bigg[ \Big(\sum_{l=0}^{\lfloor bx\rfloor} e^{Z_l}\Big)^{-1} \bigg]
\leq
    \frac{\Psi\big( \lfloor bx\rfloor+1\big) -\Psi\big( \lfloor ax\rfloor\big)}{ x^H \ell(x)}
    \frac{x}{\big(\lfloor bx\rfloor  -\lfloor ax\rfloor +1 \big)}.
$$
Since
\begin{eqnarray}
\frac{\Psi( \lfloor bx\rfloor +1) -\Psi( \lfloor ax\rfloor)}{ x^H \ell(x)}
&=&
    \frac{\Psi( \lfloor bx\rfloor +1)}{ x^H \ell(x)} - \frac{\Psi( \lfloor ax\rfloor )}{ x^H \ell(x)} \notag \\
&=&
    \frac{\Psi( \lfloor bx\rfloor+1)}{ (\lfloor bx\rfloor+1)^H \ell(\lfloor bx\rfloor+1) }
    \frac{(\lfloor bx\rfloor+1)^H \ell(\lfloor bx\rfloor+1)}{x^H \ell(x)} \notag
\\
&&-
    \frac{\Psi( \lfloor ax\rfloor)}{ \lfloor ax\rfloor^H \ell(\lfloor ax\rfloor)}
    \frac{\lfloor ax\rfloor^H \ell(\lfloor ax\rfloor)}{x^H \ell(x)} \notag \\
&\rightarrow &
    \kappa (b^H -a^H), \label{eqn:remark2ofrefeee}
\end{eqnarray}
as $x\rightarrow +\infty$, we obtain
$$
    \limsup_{x\rightarrow +\infty} \frac{x^{1-H}}{\ell(x)} \EE\bigg[ \Big(\displaystyle\sum_{l=0}^{\lfloor bx\rfloor} e^{Z_l}\Big)^{-1} \bigg]
\leq
    \frac{\kappa \big(b^H -a^H\big) }{b-a}.
$$
Now taking $b=1$ and $a\uparrow 1$, we get
$$
    \limsup_{x\rightarrow +\infty} \frac{x^{1-H}}{\ell(x)} \EE\bigg[ \Big(\displaystyle\sum_{l=0}^{\lfloor x\rfloor} e^{Z_l}\Big)^{-1} \bigg]
\leq
    \kappa H.
$$
{\bf Estimation of the liminf:}  First, from (\ref{diff}),  we have the inequality
\begin{eqnarray*}
    \Psi\big( \lfloor bx\rfloor+1\big) -\Psi\big( \lfloor ax\rfloor\big)
&\leq &
    \big(\lfloor bx\rfloor  -\lfloor ax\rfloor +1 \big) \EE\bigg[ \Big(\sum_{l=1}^{\lfloor ax\rfloor} e^{Z_l} \Big)^{-1}\bigg].
\end{eqnarray*}
Therefore,
$$
    \frac{x^{1-H}}{\ell(x)}
    \EE\bigg[ \Big(\sum_{l=1}^{\lfloor ax\rfloor} e^{Z_l}\Big)^{-1} \bigg]
\geq
    \frac{\Psi\big( \lfloor bx\rfloor+1\big) -\Psi\big( \lfloor ax\rfloor\big)}{ x^H \ell(x)}
    \frac{x}{\big(\lfloor bx\rfloor  -\lfloor ax\rfloor +1\big)}.
$$
By the same argument as in (\ref{eqn:remark2ofrefeee}), we obtain
$$
    \liminf_{x\rightarrow +\infty} \frac{x^{1-H}}{\ell(x)}
    \EE\bigg[ \Big(\displaystyle\sum_{l=1}^{\lfloor x\rfloor} e^{Z_l}\Big)^{-1} \bigg]
\geq
    \kappa H.
$$
This ends the proof of Lemma \ref{Mol}.
\end{proof}
\begin{proof}[Proof of Theorem \ref{main1}]
From the definition of $\mathcal T$, the correspondence \eqref{eqDefBPRE} between the BPCGE $(Z_n)_{n\geq 0}$ and the RWCGE $(S_n)_{n\geq 0}$, and formula (\ref{eqProbaAtteinte}), we have
\begin{eqnarray} \label{tailT}
\PP[\mathcal T>N] &=& \PP[ Z_N>0]\nonumber\\
&=&
    \PP[ \tau(N) < \tau(-1)]\nonumber\\
&=&
    E [ P_{\omega} [\tau(N) < \tau(-1) ] ]\nonumber\\
&=&
    E \bigg[ e^{V(-1)}   \Big(\sum_{k=-1}^{N-1} e^{V(k)} \Big)^{-1} \bigg]\nonumber\\
&=&
    E \bigg[ \Big(\sum_{k=0}^{N} e^{V(k)} \Big)^{-1} \bigg],
\label{eqProofUpperBoundTh2}
\end{eqnarray}
using that the increments of $V$ are stationary.
We now explain how the upper bound of Theorem \ref{main1} can be deduced from our Lemma~\ref{Mol}.
Since our $V$ satisfies the hypotheses of that lemma (see the proof of Theorem 11 in \cite{AGPP}) we have
$$
    E \bigg[ \Big(\sum_{k=0}^{N} e^{V(k)} \Big)^{-1} \bigg]
\leq
    c \frac{\sqrt{\ell(N)}}{N^{1-H}}
$$
for large $N$, with $\ell=1$ if $V=B_H$. This, combined with \eqref{eqProofUpperBoundTh2} proves the upper bound of Theorem~\ref{main1}.

Moreover, set $\beta_N:=\log N$ and choose $a_N$
such that $\sigma_{a_N}\sim_{N\to+\infty} \beta_N$.
Set $\phi(k,N)=0$ if $k< a_N$ and $\phi(k,N)=-\beta_N$ otherwise.
Due to Slepian's lemma (using the non-negative correlations of the $X_i$ and thus $V$),
\begin{eqnarray*}
&&
    E \bigg[ \Big(\sum_{k=0}^{N} e^{V(k)} \Big)^{-1} \bigg]
\\
&\ge&
    \left(\sum_{k=0}^Ne^{\phi(k,N)}\right)^{-1}
    P\Big[\forall k=1,...,N,\  V(k)\le \phi(k,N)\Big]
\\
&\ge&
    (a_N+Ne^{-\beta_N})^{-1}P\left[\max_{k=0,...,a_N}V(k)\le 0\right]
    P\Big[V(a_N)\le -\beta_N\Big]
\\
&&
    \qquad\qquad\qquad\qquad\qquad\qquad\qquad\qquad\qquad\qquad
    \times
    P\left[\max_{k=a_N+1,...,N}(V(k)-V(a_N))< 0 \right]
\\
&\ge&
    (a_N+Ne^{-\beta_N})^{-1}P\left[\max_{k=0,...,a_N}V(k)\le 0\right]
    P\Big[\sigma_{a_N}V(1)\le -\beta_N\Big]P\left[
          \max_{k=1,...,N-a_N}V(k)< 0 \right]
\\
&\ge& K (a_N+1)^{-1}\frac{\sigma_{a_N}}{a_N\sqrt{\log a_N}}
   \frac{\sigma_N}{N\sqrt{\log N}},
\end{eqnarray*}
due to \cite[Theorem 11]{AGPP}.
This proves the lower bound of Theorem~\ref{main1}.
\end{proof}

    \medskip
    
{\noindent\bf Acknowledgments.} We are grateful to Nina Gantert for stimulating discussions.

\end{document}